\numberwithin{equation}{section}
\theoremstyle{plain}
\newtheorem{Thm}{Theorem}[section]
\newtheorem{Lem}[Thm]{Lemma}
\newtheorem{Coro}[Thm]{Corollary}
\newtheorem{Prop}[Thm]{Proposition}
\theoremstyle{definition}
\newtheorem{Rem}[Thm]{Remark}
\definecolor{darkgreen}{rgb}{0,0.6,0.05}
\newcommand{\connect}{\xleftrightarrow}
\newcommand\fg{\mathfrak{g}}
\let\qed=\QED
\renewcommand{\epsilon}{\varepsilon}
\newcommand{\Z}{\mathbb{Z}}
\def\H{\mathbb{H}}
\def\P{\mathbb{P}} 
\def\E{\mathbb{E}} 
\def\md{\mid}
\def\Bb#1#2{{\def\md{\bigm| }#1\bigl[#2\bigr]}}
\def\Eb{\Bb\E}
\def \p {{\partial}}
\def\<#1{\langle #1\rangle}
\def\bi{\begin{itemize}}  
\def\ei{\end{itemize}}
\def\bnum{\begin{enumerate}} 
\def\enum{\end{enumerate}}
\title{One-arm exponents of high-dimensional percolation revisited}
\begin{document}
	\author{Diederik van Engelenburg\footnotemark[1]\footnote{TU Wien, Wien, Austria, \url{diederik.engelenburg@tuwien.ac.at}}\:, Christophe Garban\footnotemark[2]\footnote{
Universit\'e Claude Bernard Lyon 1, CNRS UMR 5208, Institut Camille Jordan, 69622 Villeurbanne, France, and Courant Institute (NYU), New York, USA, \url{christophe.garban@gmail.com}}\:, Romain Panis\footnotemark[3]\footnote{Universit\'e Claude Bernard Lyon 1, Villeurbanne, France, \url{panis@math.univ-lyon1.fr}}\:,\\ Franco Severo\footnotemark[4]\footnote{CNRS and Sorbonne Universit\'e, Paris, France, \url{severo@lpsm.paris}}}

	\maketitle

	\abstract{We consider sufficiently spread-out Bernoulli percolation in dimensions ${d>6}$. We present a short and simple proof of the up-to-constants estimate for the one-arm probability in both the full-space and half-space settings. These results were previously established by Kozma and Nachmias and by Chatterjee and Hanson, respectively. Our proof improves upon the entropic technique introduced by Dewan and Muirhead, relying on a sharp estimate on a suitably chosen correlation length recently obtained by Duminil-Copin and Panis. This approach is inspired by our companion work \cite{vEGPS}, where we compute the one-arm exponent for several percolation models related to the high-dimensional Ising model.

}

	\section{Introduction}\label{sec:intro}
	
		In the context of \emph{finite-range} Bernoulli percolation on $\mathbb Z^d$ ($d\geq 2$), it is predicted that there is no infinite cluster at the critical point: $\theta(p_c):=\mathbb P_{p_c}[0\connect{}\infty]=0$. In the last fifty years, significant progress has been made regarding this conjecture. Kesten \cite{Kesten1980criticalproba} proved it in the case of two-dimensional (planar) nearest-neighbour percolation. In higher dimensions, the \emph{lace expansion} allowed to prove that $\theta(p_c)=0$ in dimensions $d>10$ for the nearest-neighbour case \cite{HaraSlade1990Perco,HaraDecayOfCorrelationsInVariousModels2008,FitznervdHofstad2017Percod10}, and in dimensions $d>6$ for sufficiently spread-out percolation \cite{HaraSladevdHofstad2003PercoSO} (see precise definition below). A more recent approach \cite{DumPan24Perco} provides an alternative proof in the latter setting. Proving that $\theta(p_c)=0$ in the ``intermediate'' dimensions $3\leq d\leq 6$ for either of the two aforementioned settings remains one of the most challenging problems in percolation theory. 
		
		The vanishing of $\theta(p_c)$ is equivalent to the convergence to $0$ of the sequence of \emph{one-arm probabilities} $\mathbb P_{p_c}[0\connect{}\Lambda_n^c]$, where $\Lambda_n:=[-n,n]^d\cap \mathbb Z^d$. According to Widom's scaling hypothesis \cite{Widom1965equation}, the behaviour of this sequence should be described by a critical exponent $\rho=\rho(d)>0$---the so-called \emph{one-arm exponent}---defined by:
		\begin{equation}\label{eq:intro one arm proba}
			\mathbb P_{p_c}[0\connect{}\Lambda_n^c]=n^{-\rho+o(1)},
		\end{equation}
where $o(1)$ tends to $0$ as $n$ tends to infinity.

In a seminal work, Kozma and Nachmias \cite{KozmaNachmias} proved that $\rho(d)=2$ whenever the critical two-point function satisfies $\mathbb P_{p_c}[0\connect{}x]\asymp |x|^{2-d}$ and $d>6$ (where $|\cdot|$ is the infinite norm on $\mathbb R^d$, and where $\asymp$ means that the two quantities are within a multiplicative constant of each other). This assumption is expected to hold in dimensions $d>6$, and has been verified in dimensions $d>10$ for the nearest-neighbour case \cite{HaraSlade1990Perco,HaraDecayOfCorrelationsInVariousModels2008,FitznervdHofstad2017Percod10}, and $d>6$ for sufficiently spread-out percolation \cite{HaraSladevdHofstad2003PercoSO,DumPan24Perco}. Let us also mention that with the breakthrough proof of conformal invariance of Smirnov \cite{SmirnovCardy2001}, it is possible to show that $\rho=\tfrac{5}{48}$ for site percolation on the triangular lattice \cite{lawler2002one}.

In this article, we focus on spread-out percolation in dimensions $d>6$. Building upon the results of \cite{DumPan24Perco}, we provide an alternative simple proof of the result of \cite{KozmaNachmias}.
We also compute (up to a multiplicative constant) the one-arm probability in the half-space setting, a result previously derived by Chatterjee and Hanson \cite{ChatterjeeHanson}. Our results rely on the entropic method of Dewan and Muirhead \cite{DewanMuirhead}, which we refine by using a more suitable notion of correlation length. We stress that our proofs do not rely on any result obtained through the use of lace expansion.
	
	\subsection{Definitions and statement of the results}	
	We consider spread-out percolation on $\Z^d$ with \emph{spread} parameter $L\geq1$. More precisely, let $\mathcal G=(V,E)$ be the graph with vertex set $V=\Z^d$ and edge set $E=E_L:=\{\{x,y\}\subset \Z^d:~|x-y|\leq L\}$. We write $x\sim y$ to say that $\{x,y\}\in E_L$. We denote by $\P_p$ the law of the random subgraph of $\mathcal G$ obtained by independently keeping (resp.~removing) each edge with probability $p$ (resp.~$1-p$). For $A,B,S\subset \Z^d$, we denote by $\{A \connect{S\:} B\}$ the event that there exists an open path from $A$ to $B$ which is fully contained in $S$. We denote by $\tau^S_p(x,y):=\mathbb P_{p}[x\connect{S\:}y]$ the restricted two-point function. When $S=\Z^d$, we omit it from the notation. 
	
	It is classical (see \cite{GrimmettPercolation1999}) that, when $d\geq 2$ and $L\geq 1$, the model undergoes a phase transition at a non-trivial critical point
	\begin{equation}
	p_c=p_c(d,L):=\inf\big\{p\in[0,1]: \P_p[0\connect{}\infty]>0\big\}.
	\end{equation}
Hara and Slade \cite{HaraSlade1990Perco} first highlighted the relevance of spread-out percolation. The \emph{universality hypothesis} asserts that the critical exponents of spread-out and nearest-neighbour percolation should match. The spread-out setting is convenient to compute these exponents as one can take the parameter $L$ large enough to apply perturbative techniques \cite{HaraSladevdHofstad2003PercoSO,DumPan24Perco}. 
	
Our first result concerns the study of the one-arm probability introduced in \eqref{eq:intro one arm proba}.

	\begin{Thm}\label{thm: main result} Let $d>6$. There exists $L_0\geq 1$ such that the following holds. For every $L\geq L_0$, there exist $c,C>0$ such that, for every $n\geq 1$, 
	\begin{equation}\label{eq:main_bulk}
	\frac{c}{n^2}\leq \P_{p_c}[0\connect{} \Lambda_n^c] \leq \frac{C}{n^2},
	\end{equation}		
	\end{Thm}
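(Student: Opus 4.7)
The plan is to prove both bounds by relating the critical one-arm probability $\P_{p_c}[0\connect{}\Lambda_n^c]$ to the infinite-cluster density $\theta(p)$ at $p$ slightly above $p_c$, through a finite-volume correlation length $L(p)$. The key input from \cite{DumPan24Perco} is the sharp estimate $L(p)\asymp |p-p_c|^{-1/2}$; together with the mean-field identity $\theta(p)\asymp p-p_c$ for $p>p_c$ in high dimensions, this immediately predicts $\P_{p_c}[0\connect{}\Lambda_{L(p)}^c]\asymp \theta(p)\asymp L(p)^{-2}$, i.e.~$\P_{p_c}[0\connect{}\Lambda_n^c]\asymp n^{-2}$ along the sequence $n=L(p)$. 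The whole proof is organised around making this heuristic rigorous.

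The main step is a stability estimate: for every $p$ close to $p_c$ and every $n\leq L(p)$,
\begin{equation*}
\P_p[0\connect{}\Lambda_n^c]\asymp \P_{p_c}[0\connect{}\Lambda_n^c].
\end{equation*}
Here the entropic method of Dewan--Muirhead \cite{DewanMuirhead} enters in a refined form: Russo's formula expresses $\tfrac{d}{dp}\log \P_p[0\connect{}\Lambda_n^c]$ as an expected number of pivotal edges, and an entropy inequality controls this derivative in terms of the correlation length. Integrating from $p_c$ to $p$ then contributes only a bounded multiplicative factor provided $n\leq L(p)$. The improvement over \cite{DewanMuirhead} is to substitute the sharper correlation length of \cite{DumPan24Perco} for a weaker proxy, which allows the argument to be closed without any appeal to the lace expansion.

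Granted the stability, the upper bound in \eqref{eq:main_bulk} follows by choosing $p>p_c$ with $L(p)\asymp n$ and writing
\begin{equation*}
\P_{p_c}[0\connect{}\Lambda_n^c]\lesssim \P_p[0\connect{}\Lambda_n^c] \leq \theta(p)+\P_p[0\connect{}\Lambda_n^c,\ 0\nleftrightarrow\infty],
\end{equation*}
where both terms on the right are of order $n^{-2}$: the first via a mean-field upper bound $\theta(p)\leq C(p-p_c)$ (a differential inequality accessible from the same entropic framework), and the second via a cluster-volume estimate. For the lower bound, an FKG/finite-energy patching argument in the supercritical regime gives $\theta(p)\lesssim \P_p[0\connect{}\Lambda_{L(p)}^c]$; combining with $\theta(p)\geq c(p-p_c)$ (Aizenman--Barsky) and stability yields $\P_{p_c}[0\connect{}\Lambda_n^c]\gtrsim n^{-2}$ along the sequence $n=L(p)$, and monotonicity of $n\mapsto \P_{p_c}[0\connect{}\Lambda_n^c]$ together with the rough continuity of $p\mapsto L(p)$ extends the bounds to all $n\geq 1$. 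The main obstacle is the stability estimate: the entropic improvement over \cite{DewanMuirhead}, powered by the sharper correlation length of \cite{DumPan24Perco}, is exactly what allows this central step to be carried out without recourse to the lace expansion.
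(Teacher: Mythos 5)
The proposal correctly identifies the two key inputs (the entropic bound of Dewan--Muirhead and the estimate $L(p)\asymp(p_c-p)^{-1/2}$ from \cite{DumPan24Perco}) and the right heuristic picture, but the specific route you propose has genuine gaps and diverges from how these tools actually close the argument.

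First, you use $L(p)$ and the estimate $L(p)\asymp|p-p_c|^{-1/2}$ on the supercritical side $p>p_c$. The quantity $L(p)$ in \eqref{e.Lp} is only defined for $p<p_c$ (for $p\geq p_c$, $\varphi_p(\Lambda_k)\geq 1$ for all $k$, so the set in the definition is empty), and \eqref{eq:input L(p)} is only stated for $p\leq p_c$. The entire proof in the paper stays on the subcritical side.

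Second, the proposed stability estimate $\P_p[0\connect{}\Lambda_n^c]\asymp\P_{p_c}[0\connect{}\Lambda_n^c]$ for $n\leq L(p)$ is not something the entropic bound delivers, and it is not what the paper proves. With $|p-p_c|\asymp n^{-2}$ and $\chi\asymp n^2$, Lemma~\ref{l.EB} only gives $|\theta_n(p)-\theta_n(p_c)|\lesssim n^{-1}\sqrt{\theta_n(p_c)}$, which — given the target $\theta_n\asymp n^{-2}$ — is of the \emph{same} order as $\theta_n(p_c)$ itself. No two-sided comparability follows. In fact the paper's upper-bound argument exploits exactly the opposite phenomenon: for $p'=p_c-Kn^{-2}$ with $K$ large, the BK decomposition and \eqref{e.PsiL} force $\theta_n(p')\leq\tfrac18\theta_{n/2}(p_c)$ — a genuine drop, not stability — and the entropic bound is used only to control the gap $\theta_n(p_c)-\theta_n(p')$, yielding the renormalisation inequality $\theta_n(p_c)\leq\tfrac18\theta_{n/2}(p_c)+O(n^{-1})\sqrt{\theta_n(p_c)}$ and closing by induction over dyadic scales.

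Third, your upper bound via $\P_{p_c}[0\connect{}\Lambda_n^c]\leq\theta(p)+\P_p[0\connect{}\Lambda_n^c,\ 0\nleftrightarrow\infty]$ requires showing that the probability that $0$ reaches scale $n$ yet lies in a finite cluster, at supercritical $p$ with $p-p_c\asymp n^{-2}$, is $\lesssim n^{-2}$. This is not an easy ``cluster-volume estimate''; relating a lower bound on the radius of a finite cluster to a lower bound on its volume at this precision is precisely the cluster-regularity machinery of \cite{KozmaNachmias} that this paper is designed to circumvent. Nothing in the paper's toolbox (Theorem~\ref{thm:INPUT}, Lemma~\ref{l.EB}, the appendix) supplies this. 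Finally, for the lower bound the paper does not use $\theta(p)\geq c(p-p_c)$ or any FKG patching; it integrates the Duminil-Copin--Tassion differential inequality \eqref{eq:diff_ineq_theta} over the window $p\in(p_c-c_3 n^{-2},p_c]$, on which $\inf_{S\subset\Lambda_n}\varphi_p(S)\geq 1/e^2$ thanks to the sharp-length estimate of Proposition~\ref{prop:equivalence of sharp length}. You would need to replace both halves of your argument with something along these lines for the proof to go through.
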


	As mentioned above, this result was first obtained by Kozma and Nachmias \cite{KozmaNachmias}. Their analysis is based on two fundamental inputs: 
	\begin{enumerate}
		\item[$(i)$] an up-to-constant estimate on the model's two point function at criticality: 
		\begin{equation}
			\tau_{p_c}(0,x)\asymp |x|^{2-d};
		\end{equation}
		\item[$(ii)$] a sharp estimate for the volume of cluster of the origin: letting $\mathcal C(0)$ denote the cluster of the origin, for every $n\geq 1$, 
		\begin{equation}\label{eq:input cluster volume}
			\mathbb P_{p_c}[|\mathcal C(0)|\geq n]\asymp n^{-1/2}.
		\end{equation} 
	\end{enumerate}
	The first input is very standard in the study of high-dimensional percolation. It relies on the ``black box'' which can either be the lace expansion or the approach of \cite{DumPan24Perco}. The second one $(ii)$ was derived via differential inequality techniques: the lower bound was derived in \cite{AizenmanBarsky1987sharpnessPerco} (see also \cite{DuminilTassionNewProofSharpness2016}), and the upper bound in \cite{BarskyAizenmanCriticalExponentPercoUnderTriangle1991} (see also \cite{hutchcroft2022derivation}). The upper bound in \eqref{eq:input cluster volume} holds under the so-called \emph{triangle condition}
	\begin{equation}\label{eq:triangle condition}
		\nabla(p_c):=\sum_{x,y\in \mathbb Z^d}\tau_{p_c}(0,x)\tau_{p_c}(x,y)\tau_{p_c}(y,0)<\infty
	\end{equation}
	Observe that the triangle condition is clearly satisfied if $(i)$ holds and if $d>6$. With these inputs, the proof of \cite{KozmaNachmias} consists in proving a renormalisation inequality on $\theta_n(p_c):=\mathbb P_{p_c}[0\connect{} \Lambda_n^c]$. To put it in a nutshell, they consider two cases: on top of the occurrence of the one-arm event, either $|\mathcal C(0)|\geq \varepsilon n^4$ or $|\mathcal C(0)|<\varepsilon n^4$. By $(ii)$ above, the first case has a probability $\lesssim 1/n^2$ which is of the good order. The handling of the second term is significantly harder and relies on the development of a refined theory of ``regularity of clusters''. Let us mention that the latter step was recently revisited and streamlined \cite{asselah2025capacity}, relying on a more geometric notion of regularity. A big advantage of the approach of Kozma and Nachmias is that these regularity properties are robust and useful beyond the context of one-arm exponent computations. Subsequent applications include \cite{vdHofstadSapoS14,ChatterjeeHanson,ChatterjeeHansonSosoe2023subcritical,chatterjee2025robust,asselah2025capacity}.
	 	
	Instead of relying on $(i)$ and $(ii)$, we use as an input some sharp near-critical estimates of the susceptibility $\chi(p)$ and the quantity $\varphi_p(S)$, introduced in \cite{DuminilTassionNewProofSharpness2016}. These estimates were proved in \cite{DumPan24Perco} for spread-out percolation, and are summarized in Theorem \ref{thm:INPUT} below.
	We stress that their proof relies  neither on the lace expansion nor on \cite{BarskyAizenmanCriticalExponentPercoUnderTriangle1991}.

The second result of this paper concerns the computation of the one-arm exponent in the half-space setting. Let $\mathbb H:=\{0,1,\ldots\}\times\mathbb Z^{d-1}$.

\begin{Thm}\label{thm: halfspace one arm} Let $d>6$. Let $L_0$ be given by Theorem \textup{\ref{thm: main result}}, and $L\geq L_0$. There exist $c,C>0$ such that, for $n\geq 1$,
	\begin{equation}\label{eq:main_half-space}
		\frac{c}{n^3} 
		\leq  \P_{p_c}[0\connect{\mathbb H\:}\Lambda_{n}^c]   \leq \frac{C}{n^3}.
	\end{equation}
\end{Thm}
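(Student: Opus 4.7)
The plan is to adapt the strategy used for Theorem~\ref{thm: main result} to the half-space setting. The extra $1/n$ factor in \eqref{eq:main_half-space} compared to \eqref{eq:main_bulk} reflects a surface exponent: the critical cluster of $0 \in \partial\H$ must not only reach distance $n$ but also remain in $\H$, which is the ``harder'' constraint.

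For the lower bound, I would combine Theorem~\ref{thm: main result} with a gluing argument via the \textup{FKG} inequality. Let $y := (\lfloor n/4 \rfloor, 0, \ldots, 0)$, which sits at distance $\asymp n$ from $\partial \H$. Applying Theorem~\ref{thm: main result} around $y$ (whose ball of radius $n/8$ lies inside $\H$) gives $\P_{p_c}[y \connect{\H} y + \Lambda_{n/8}^c(y)] \gtrsim n^{-2}$. It thus suffices to show that, with probability $\gtrsim n^{-1}$, the origin is connected in $\H$ to at least one point of a suitable macroscopic neighbourhood of $y$; the full lower bound then follows by \textup{FKG}. This remaining estimate should follow from a second moment argument based on the half-space two-point function $\tau^{\H}_{p_c}(0,x) \asymp x_1 |x|^{-d}$, whose first moment over the annulus $\{|x| \asymp n,\, x \in \H\}$ is of order $n$, combined with a triangle-type upper bound on the second moment.

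For the upper bound, I would adapt the Dewan--Muirhead entropic argument employed for Theorem~\ref{thm: main result} to the half-space. The key observation is that the half-space susceptibility $\chi^{\H}(p) := \sum_{x \in \H} \tau_p^{\H}(0,x)$ satisfies $\chi^{\H}(p) \asymp L(p)$---one power of $L$ less than the bulk $\chi(p) \asymp L(p)^2$---due to the surface form $\tau^{\H}_p(0,x) \asymp x_1 |x|^{-d}$ of the two-point function; this surface-volume deficit is the source of the extra $1/n$ factor. Choosing $p_n$ such that $L(p_n) \asymp n$ (so that $|p_n - p_c| \asymp n^{-2}$ by Theorem~\ref{thm:INPUT}) and running the entropic argument with $\chi^{\H}$ in place of $\chi$ at the appropriate step should yield $\P_{p_c}[0 \connect{\H} \Lambda_n^c] \lesssim n^{-3}$. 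The main obstacle is precisely this swap: verifying that the Dewan--Muirhead inequality carries over cleanly from $\Z^d$ to $\H$ with $\chi$ replaced by $\chi^{\H}$. Since that machinery only requires near-critical control of $\chi(p)$ and $\varphi_p(S)$ provided by Theorem~\ref{thm:INPUT}, both of which transfer to $\H$ via the monotonicity $\tau^{\H}_p \leq \tau_p$ together with the explicit surface computation above, I expect the adaptation to be short and essentially to reduce to replacing one bulk factor by its surface counterpart at the correct step.
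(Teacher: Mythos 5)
Your proposal has two genuine gaps, both traceable to the same issue: you are assuming the pointwise half-space two-point function estimate $\tau^{\H}_{p_c}(0,x)\asymp x_1|x|^{-d}$, which is precisely the kind of input this paper cannot use. That estimate is due to Chatterjee and Hanson, and its proof goes through the Kozma--Nachmias regularity theory that the present approach is designed to avoid; nothing in Theorem \ref{thm:INPUT} gives pointwise control of $\tau^{\H}_{p_c}$. The paper works only with \emph{integrated} quantities: the bound $\chi^{\H}(p)\lesssim(p_c-p)^{-1/2}$ (which you correctly identify as the source of the extra $1/n$) is derived in Proposition \ref{prop:half-space_susceptibility} from $\chi^{\H}(p)=\sum_k\psi_p(\H_k)$ and the exponential decay \eqref{e.PsiH}, and the lower bound in the theorem is proved by a second moment method on $\mathcal N_n=\#\{x\in\partial\H_{-n}:0\connect{\mathbb{S}_n\:}x\}$ using only $\psi_{p_c}(\H_k)\asymp 1$ from \eqref{e.PsiCrit} and BK resummations, with no pointwise bounds at all. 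Your FKG gluing for the lower bound has the additional defect that connecting $0$ to \emph{some} point of a neighbourhood of $y$ and connecting $y$ itself to distance $n/8$ are events about two a priori distinct clusters, so FKG does not produce the desired connection; to glue you would need to fix a single point $z$ and lower-bound $\tau^{\H}_{p_c}(0,z)$ pointwise, which is again unavailable.

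The second gap is in the upper bound: the entropic renormalisation does not ``carry over cleanly'' to the event $\{0\connect{\H\:}\Lambda_n^c\}$, because the half-space one-arm probability is not translation invariant. In the full-space proof the first-exit decomposition produces a term $\P_{p'}[y\connect{}(\Lambda_{n/2}+y)^c]=\theta_{n/2}(p')$ for the exit point $y$; in the half-space the corresponding term $\P_{p'}[y\connect{\H\:}(\Lambda_{n/2}+y)^c]$ depends on $\dist(y,\partial\H)$ and can be of order $n^{-2}$ when $y$ is deep inside $\H$, so the induction does not close. The paper circumvents this by (i) first bounding the point-to-hyperplane probability $\P_{p_c}[0\connect{\H\:}\H_{-n}]\lesssim n^{-3}$ via a \emph{last-exit} decomposition across $\partial\H_{-n/2}$ (which restores translation invariance) together with $\chi^{\H}(p')\lesssim n$, and then (ii) controlling exits through the $2(d-1)$ lateral faces of $\Lambda_n$ by a separate averaging argument over slices $S_{k'}$ with $n/4\le k'\le n/3$ --- needed precisely because the natural bound $\sum_{x\in S_{k'}}\tau^{\H}_{p_c}(0,x)\lesssim 1$ for a fixed slice is not known to hold under the paper's assumptions. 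Your proposal addresses neither of these points, so as written it does not yield the upper bound.
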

This result was previously derived by Chatterjee and Hanson \cite{ChatterjeeHanson}. One important feature of their proof is that it heavily relies on  \cite{KozmaNachmias}: not only does it use their one-arm computation, by more fundamentally the whole theory of regularity that is developed there. However, let us mention that Chatterjee and Hanson go further by also computing the exponent of the two-point function restricted to the half-space.

Before moving to a precise explanation of our proofs, let us stress that the restriction to spread-out percolation is somewhat artificial in what follows. More precisely, if one could prove (e.g.\ for nearest-neighbour percolation) an analogue of Theorem \ref{thm:INPUT} below which does not rely on \cite{KozmaNachmias}, the whole strategy of this paper would go through.

\paragraph{Notations.} If $x=(x_1,\ldots,x_d)\in \mathbb R^d$, we let $|x|:=\max_{1\leq i\leq d}|x_i|$ denotes the $\ell^\infty$ norm of $x$. We let $\mathbf{e}_1:=(1,0,\ldots,0)\in \mathbb R^d$. Throughout the paper various $(d,L)$-dependent constant appear without playing any particular role. We will therefore use the following compact notation: we write $f\lesssim g$ (or $g\gtrsim  f$) if there exists a constant $C=C(d,L)\in (0,\infty)$ such that $f\leq Cg$.

\subsection{Strategy of the proof}\label{sec:introStrat}	
	
We will use the entropic bound of \cite{DewanMuirhead} in the precise formulation of the lemma below. 
\begin{Lem}[{\hspace{1pt}\cite[Proposition~1.21]{DewanMuirhead}}]\label{l.EB} For every $d\geq 2$ and $L\geq 1$, there exists a constant $C_0=C_0(d,L)>0$, such that for every set $\Lambda\subset \mathbb Z^d$ containing $0$, every event $\mathcal{A}$ measurable with respect to $\mathcal C_{\Lambda}(0):=\{x\in \mathbb Z^d:0\connect{\Lambda\:}x\}$, and every $\tfrac{p_c}{2}\leq p'< p\leq \tfrac{p_c+1}{2}$, we have
\begin{equation}\label{eq:EB}
\big|\mathbb{P}_{p}[\mathcal{A}] - \mathbb{P}_{p'}[\mathcal{A}]\big| \leq C_0 |p-p'| \sqrt{\max\{\mathbb{P}_{p}[\mathcal{A}], \mathbb{P}_{p'}[\mathcal{A}]\}
\mathbb{E}_{p'}[|\mathcal C_{\Lambda}(0)|]}.
\end{equation}
\end{Lem}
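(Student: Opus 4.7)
The plan is to combine a likelihood-ratio argument along an exploration of $\mathcal{C}_\Lambda(0)$ with a Hellinger-type Cauchy--Schwarz. The key feature is to work with $\sqrt{M_T}$ rather than $M_T$ itself, which avoids the exponential blow-up of the second moment of the likelihood ratio and yields a bound that is linear in $\mathbb{E}_{p'}[|\mathcal{C}_\Lambda(0)|]$.

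First, I would explore $\mathcal{C}_\Lambda(0)$ by a breadth-first search confined to $\Lambda$, revealing one edge at a time; let $(e_k)_{1\leq k\leq T}$ denote the queried edges and $T$ their total number. Since each $e_k$ is incident to $\mathcal{C}_\Lambda(0)$ and every vertex has at most $cL^d$ neighbours in $\mathcal{G}$, one obtains the deterministic bound $T \leq cL^d |\mathcal{C}_\Lambda(0)|$. Under $\mathbb{P}_{p'}$, the status $\omega_{e_k}$ is Bernoulli$(p')$ given $\mathcal{F}_{k-1}:=\sigma(e_j,\omega_{e_j}:j<k)$. Setting $L_e:=(p/p')^{\omega_e}((1-p)/(1-p'))^{1-\omega_e}$ and $M_k:=\prod_{j\leq k}L_{e_j}$, both $(M_k)$ and $(\sqrt{M_k}/\alpha^k)$ are $\mathbb{P}_{p'}$-martingales, where $\alpha := \sqrt{pp'}+\sqrt{(1-p)(1-p')}\in(0,1]$. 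Since $\mathcal{A}$ is determined by $(\omega_{e_k})_{k\leq T}$, optional stopping yields $\mathbb{P}_p[\mathcal{A}] = \mathbb{E}_{p'}[M_T \mathbf{1}_\mathcal{A}]$ and $\mathbb{E}_{p'}[\sqrt{M_T}] = \mathbb{E}_{p'}[\alpha^T]$.

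Next, I would factorise $M_T - 1 = (\sqrt{M_T}-1)(\sqrt{M_T}+1)$ and apply Cauchy--Schwarz to obtain
\[
|\mathbb{P}_p[\mathcal{A}]-\mathbb{P}_{p'}[\mathcal{A}]|^2 \leq \mathbb{E}_{p'}\bigl[(\sqrt{M_T}+1)^2\mathbf{1}_\mathcal{A}\bigr]\cdot \mathbb{E}_{p'}\bigl[(\sqrt{M_T}-1)^2\bigr].
\]
The first factor is bounded by $4\max\{\mathbb{P}_p[\mathcal{A}],\mathbb{P}_{p'}[\mathcal{A}]\}$ using $(\sqrt{x}+1)^2 \leq 2(x+1)$ and $\mathbb{E}_{p'}[M_T\mathbf{1}_\mathcal{A}]=\mathbb{P}_p[\mathcal{A}]$. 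For the second factor, $\mathbb{E}_{p'}[(\sqrt{M_T}-1)^2] = 2(1-\mathbb{E}_{p'}[\alpha^T])$; the telescoping inequality $1-\alpha^T \leq (1-\alpha)T$, the Taylor expansion $1-\alpha = (p-p')^2/(8p'(1-p')) + O((p-p')^3)$, and the bound on $T$ together give $\mathbb{E}_{p'}[(\sqrt{M_T}-1)^2]\leq C(p-p')^2 L^d \mathbb{E}_{p'}[|\mathcal{C}_\Lambda(0)|]$, with $C$ depending on $d,L$ and on the restriction $p'\geq p_c/2$. Taking square roots yields the stated inequality.

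The main obstacle is to avoid the exponential blow-up that a naive Cauchy--Schwarz applied directly to $M_T - 1$ would produce: one would need to control $\mathbb{E}_{p'}[M_T^2] = \mathbb{E}_{p'}[\rho^T]$ with $\rho = 1 + (p-p')^2/(p'(1-p'))$, a quantity which can reach order $\rho^{|\mathcal{C}_\Lambda|}$ and fails to produce the desired linear-in-$\mathbb{E}[|\mathcal{C}_\Lambda|]$ bound. The Hellinger variant $(\sqrt{M_T}-1)^2$ converts the would-be exponential into the harmless telescoping sum $1-\alpha^T \leq (1-\alpha)T$, producing the correct linearisation. A secondary technicality is the justification of optional stopping when $T$ is unbounded, which one handles by truncating to $T\wedge N$ and passing to the limit: when $\mathbb{E}_{p'}[|\mathcal{C}_\Lambda(0)|]<\infty$ the bound on $T$ ensures $T$ is a.s.\ finite and $M_{T\wedge N}$ is uniformly integrable, while otherwise the inequality is vacuous.
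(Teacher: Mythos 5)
Your proposal attempts a from-scratch proof of the entropic bound, whereas the paper simply cites \cite[Proposition~1.21]{DewanMuirhead} and notes that the $p$-dependent constants appearing there are uniformly bounded on $[\tfrac{p_c}{2},\tfrac{p_c+1}{2}]$. Your overall strategy --- change of measure along the exploration, the Hellinger square-root factorisation $M_T-1=(\sqrt{M_T}-1)(\sqrt{M_T}+1)$, Cauchy--Schwarz with the $\mathbf{1}_\mathcal{A}$ placed in the $(\sqrt{M_T}+1)^2$-factor --- is exactly the right idea and mirrors the entropy-method philosophy of Dewan--Muirhead. However, there is a genuine error in the key step.

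The claim ``optional stopping yields $\mathbb{E}_{p'}[\sqrt{M_T}]=\mathbb{E}_{p'}[\alpha^T]$'' is false for a general stopping time $T$. Optional stopping applied to the martingale $\sqrt{M_k}\,\alpha^{-k}$ gives $\mathbb{E}_{p'}[\sqrt{M_T}\,\alpha^{-T}]=1$, which does not factorise since $\sqrt{M_T}$ and $\alpha^T$ are correlated. For example, if the exploration has one first edge $e_1$ and then a second edge only when $e_1$ is open (so $T=1+\mathbf{1}_{\{\omega_{e_1}=1\}}$), one computes $\mathbb{E}_{p'}[\sqrt{M_T}]=\sqrt{(1-p)(1-p')}+\sqrt{pp'}\,\alpha$ and $\mathbb{E}_{p'}[\alpha^T]=(1-p')\alpha+p'\alpha^2$, which differ. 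The subsequent telescoping inequality $1-\alpha^T\le(1-\alpha)T$ is then applied to the wrong quantity. A natural attempt to repair this via the supermartingale decomposition $1-\mathbb{E}_{p'}[\sqrt{M_T}]=(1-\alpha)\,\mathbb{E}_{p'}\bigl[\sum_{k=1}^T\sqrt{M_{k-1}}\bigr]$ followed by Cauchy--Schwarz on $\mathbb{E}_{p'}[\sqrt{M_{k-1}}\mathbf{1}_{T\ge k}]$ produces the geometric mean $\sqrt{\mathbb{E}_p[T]\,\mathbb{E}_{p'}[T]}$, not $\mathbb{E}_{p'}[T]$; this is a real obstruction, since the lemma is used in the paper with $p=p_c$ and $\Lambda=\mathbb{Z}^d$, where $\mathbb{E}_p[T]=\infty$. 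The correct fix is to use the pointwise inequality $1-\sqrt{x}\le-\tfrac12\log x$ to pass from the Hellinger affinity to a Kullback--Leibler divergence, and then apply Wald's identity to the additive functional $-\log M_T=\sum_{k\le T}(-\log L_{e_k})$, whose increments have conditional $\mathbb{P}_{p'}$-mean $d_{\mathrm{KL}}(p'\Vert p)$. This yields $1-\mathbb{E}_{p'}[\sqrt{M_T}]\le\tfrac12\,d_{\mathrm{KL}}(p'\Vert p)\,\mathbb{E}_{p'}[T]$, and the asymmetry of KL (always integrating against the ``cheap'' law $\mathbb{P}_{p'}$) is precisely what produces the $\mathbb{E}_{p'}$ rather than a symmetric quantity. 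Since $d_{\mathrm{KL}}(p'\Vert p)\lesssim(p-p')^2$ uniformly for $p,p'\in[\tfrac{p_c}{2},\tfrac{p_c+1}{2}]$, the stated bound then follows. Everything else in your argument (the bound $T\lesssim L^d|\mathcal C_\Lambda(0)|$, the Cauchy--Schwarz steps, the truncation to handle optional stopping) is fine.
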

\begin{proof} We first assume that $\Lambda$ is finite and apply \cite[Proposition~1.21]{DewanMuirhead} to the standard exploration of the cluster of the origin in $\Lambda$. Since $p_c\in (0,1)$ one has $\tfrac{p_c+1}{2},\tfrac{p_c}{2}\in (0,1)$ so that the quantities $(p(1-p))^{-1/2}, (p'(1-p'))^{-1/2}$ (which appear in the entropic bound of \cite{DewanMuirhead}) are bounded away from infinity by a constant which only depends on $p_c$, and hence on $d,L$. The extension to the case $\Lambda$ infinite follows by approximation.
\end{proof}

We remark that \eqref{eq:EB} can be seen as a generalization of the classical differential inequality $\frac{d}{dp} \mathbb{P}_{p}[\mathcal{A}] \leq \frac{1}{p(1-p)} \sqrt{ \mathbb{P}_{p}[\mathcal{A}] \mathbb{E}_{p}[|\mathcal C_{\Lambda}(0)|]}$, proved in \cite{OS2007learning}.

In \cite{DewanMuirhead}, the authors applied the above result to obtain a bound on the one-arm probability of \eqref{eq:intro one arm proba}. Let us briefly explain their argument. Apply Lemma \ref{l.EB} to $\Lambda=\mathbb Z^d$, $\mathcal A:=\{0\connect{} \Lambda_n^c\}$, and $p=p_c$. Then, pick $p'$ as follows. First, notice that one may find\footnote{Since $\varphi_{p_c}(\Lambda_n)=p_c\sum_{\substack{x\in \Lambda_n, \:y\notin \Lambda_n, \:x\sim y}}\tau^{\Lambda_n}_{p_c}(0,x)\geq 1$ (see \cite{DuminilTassionNewProofSharpness2016}), one has $\mathbb P_{p_c}[0\connect{} \Lambda_n^c]\gtrsim n^{1-d}$. This justifies the existence of $\alpha$. } $\alpha=\alpha(d,L)>0$ such that, for every $n\geq 2$,
\begin{equation}
	\mathbb P_{p_c}[0\connect{}\Lambda_n^c]\geq \frac{1}{n^\alpha}.
\end{equation}
The idea is to pick $p'=p'(n)$ such that
\begin{equation}\label{eq:onearm drops at p' dm}
	\mathbb P_{p'}[0\connect{}\Lambda_n^c]\leq \frac{1}{2 n^\alpha}\leq \frac{1}{2}\mathbb P_{p_c}[0\connect{}\Lambda_n^c].
\end{equation}
To prove the existence of $p'$, the authors consider the \emph{correlation length} $\xi(p)$, defined as
	\begin{equation}
		\xi(p')^{-1}\coloneqq \lim_{n\to\infty} -\frac{1}{n}\log \tau_{p'}(0, n \mathbf{e}_1).
	\end{equation}
Combining the bound $\xi(p')\lesssim (p_c-p')^{-1/2}$ obtained in \cite{hara1990mean,HaraSlade1990Perco}, and the bound
\begin{equation}
	\mathbb P_{p'}[0\connect{} \Lambda_n^c]\leq \sum_{x\in  \Lambda_{n+L}\setminus \Lambda_{n}}\tau_{p'}(0,x)\lesssim n^{d-1}e^{-n/\xi(p')},
\end{equation}
(where the second inequality follows from the inequality $\tau_{p'}(0,x)\leq e^{-|x|/\xi(p')}$ \cite[Chapter~6.2]{GrimmettPercolation1999}) implies that \eqref{eq:onearm drops at p' dm} holds for $p'=p_c-\frac{K(\log n)^2}{n^2}$, for a constant $K$ large enough (in terms of $d$, $L$ and $\alpha$). Applying Lemma \ref{l.EB} for this choice of $p'$ (and $p=p_c$), and relying on the mean-field bound on the susceptibility $\mathbb E_{p'}[|\mathcal C(0)|]=\chi(p')\lesssim (p_c-p')^{-1}$ (see \cite{AizenmanNewmanTreeGraphInequalities1984}) yields
\begin{equation}
	\mathbb P_{p_c}[0\connect{}\Lambda_n^c]\lesssim \frac{(\log n)^2}{n^2},
\end{equation}
which is a weaker version of the result of \cite{KozmaNachmias}. To remove the $(\log n)^2$ factor above, we consider another notion of correlation length, studied in \cite{DumPan24Perco}.

Let us present the main input in our analysis. We will need some notations. 
If $p\in [0,1]$ and $S\subset \mathbb Z^d$ contains $0$, let
	\begin{equation}
		\varphi_p(S):=p\sum_{\substack{x\in S\\y\notin S\\x\sim y}}\tau^{S}_p(0,x).
	\end{equation}
	Such a quantity was first considered in \cite{DuminilTassionNewProofSharpness2016}. We define another notion of correlation length based on $\varphi_p(S)$ as follows: for every $p<p_c$,
	\begin{equation}\label{e.Lp}
	L(p)\coloneqq \min \{k\geq1: \varphi_p(\Lambda_k)\leq 1/e^2\}.
	\end{equation}
	This natural length scale was first considered in \cite[Section~4]{hutchcroft2022derivation}. The closely related \emph{sharp length} introduced in \cite{PanisTriviality2023} in the context of the Ising model, is also considered below, see \eqref{e.Lp2}. These quantities were studied in several other papers \cite{PanisTriviality2023,DuminilPanis2024newLB,DumPan24WSAW,vEGPS}. By \cite{DuminilTassionNewProofSharpness2016}, one has $\varphi_{p_c}(\Lambda_k)\geq 1$ for every $k\geq 1$. This observation, together with exponential decay of the two-point function below $p_c$ (see \cite{Mensikov1986coincidence,AizenmanBarsky1987sharpnessPerco,DuminilTassionNewProofSharpness2016}) allows to prove that $L(p)$ diverges as $p$ approaches $p_c$.
	
	Recall that the susceptibility $\chi(p)$ is defined, for $p<p_c$, by
	\begin{equation}
		\chi(p):=\sum_{x\in \mathbb Z^d}\tau_p(0,x).
	\end{equation}
	Below, we consider translated half-spaces $\mathbb H_n:=\mathbb H-n\mathbf{e}_1$, for $n\in\Z$ (recall that $\mathbb H:=\{0,1,\ldots\}\times\mathbb Z^{d-1}$). 
	We will base our analysis on the following result from \cite{DumPan24Perco}.
	\begin{Thm}[{\hspace{1pt}\cite[Corollary~1.4]{DumPan24Perco}}]\label{thm:INPUT} Let $d>6$. There exist $L_0\geq 1$ such that the following holds. For every $L\geq L_0$, there exist $c_1,C_1>0$ such that, and for every $p\leq p_c$,
	\begin{equation}\label{eq:input chi(p)}
		c_1(p_c-p)^{-1}\leq \chi(p)\leq C_1(p_c-p)^{-1},
	\end{equation}
	and
	\begin{equation}\label{eq:input L(p)}
		c_1(p_c-p)^{-1/2}\leq L(p)\leq C_1(p_c-p)^{-1/2}.
	\end{equation}
	Moreover, for every $n\geq 0$,
	\begin{equation}\label{eq:input Phi(H_n)}
		\varphi_p(\mathbb H_n)\leq C_1\exp\left(-c_1(p_c-p)^{1/2}n\right).
	\end{equation}
	\end{Thm}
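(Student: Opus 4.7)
The plan is to adapt the entropic strategy of Section \ref{sec:introStrat} to the half-space, applying Lemma \ref{l.EB} with $\Lambda = \H$ and the event $\mathcal A_n := \{0\connect{\H\:}\Lambda_n^c\}$. The key new input, which replaces the full-space bound $\chi(p) \asymp (p_c-p)^{-1}$, is the half-space susceptibility estimate
\[
\chi^{\H}(p) := \E_p[|\mathcal C_{\H}(0)|] \lesssim L(p) \asymp (p_c-p)^{-1/2}.
\]
The square-root improvement is precisely what promotes the $n^{-2}$ exponent of Theorem \ref{thm: main result} to the predicted $n^{-3}$.

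To prove this bound, I would split $\chi^{\H}(p) = \sum_{n\geq 0}\chi_n^{\H}(p)$ with $\chi_n^{\H}(p) := \sum_{x\in\H,\, x_1 = n}\tau_p^{\H}(0,x)$. By the symmetry of the two-point function and the translation invariance of $\H$ in directions orthogonal to $\mathbf e_1$, one has $\chi_n^{\H}(p) = \sum_{y\in\H,\, y_1 = 0}\tau_p^{\H}(n\mathbf e_1, y)$. After translating $\H_n$ by $n\mathbf e_1$, the sum defining $\varphi_p(\H_n)$ becomes $p\sum \tau_p^{\H}(n\mathbf e_1, x')$ over edges $(x',y')$ across $\partial\H$. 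Restricting to the edges $(y, y - \mathbf e_1)$ with $y \in \H$ and $y_1 = 0$ shows $p\chi_n^{\H}(p) \leq \varphi_p(\H_n)$, and summing the geometric bound \eqref{eq:input Phi(H_n)} then yields $\chi^{\H}(p) \lesssim L(p)$.

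\textbf{Upper bound.} Applying Lemma \ref{l.EB} with $p = p_c$ and $p' = p_c - K/n^2$, and using AM--GM, one obtains
\[
\P_{p_c}[\mathcal A_n] \leq 2\P_{p'}[\mathcal A_n] + C_0^2 (p_c-p')^2 \chi^{\H}(p').
\]
For the subcritical term I would use $\P_{p'}[\mathcal A_n] \leq \P_{p'}[0\connect{}\Lambda_n^c] \leq 2d\, \varphi_{p'}(\H_n) \lesssim \exp(-c n/L(p'))$, obtained by a union bound over the $2d$ axis-aligned half-spaces at distance $n$ from $0$ corresponding to the faces of $\Lambda_n$, together with \eqref{eq:input Phi(H_n)}. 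The second term is $\lesssim K^{3/2}/n^3$ since $\chi^{\H}(p') \lesssim n/\sqrt K$. This yields $\P_{p_c}[\mathcal A_n] \lesssim \exp(-c\sqrt K) + K^{3/2}/n^3$. The main technical difficulty is removing the polylogarithmic factor arising when $K$ is balanced naively between the two terms (which gives only $(\log n)^3/n^3$); I expect this to follow from a bootstrap argument or from integrating the entropic inequality along a well-chosen continuous family of $p$'s, as carried out in the companion work \cite{vEGPS}.

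\textbf{Lower bound.} I would establish $\P_{p_c}[\mathcal A_n] \gtrsim n^{-3}$ by a second-moment argument on
\[
X := \#\bigl\{x \in \H \cap (\Lambda_{2n}\setminus\Lambda_n) \,:\, x_1 \geq n/10,\; 0\connect{\H\:}x\bigr\}.
\]
A lower bound on $\E[X]$ follows from a matching lower bound on the half-space two-point function, itself derived from the lower bound on $\varphi_{p_c}(\H_n)$ implicit in \eqref{eq:input L(p)}. The second moment $\E[X^2]$ is controlled by the tree-graph inequality combined with the triangle condition, which is valid for $d > 6$ thanks to \eqref{eq:input chi(p)} and the standard estimate $\tau_{p_c}(0,x) \asymp |x|^{2-d}$. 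The Paley--Zygmund inequality then gives $\P_{p_c}[\mathcal A_n] \geq \P_{p_c}[X \geq 1] \gtrsim \E[X]^2/\E[X^2] \gtrsim n^{-3}$.
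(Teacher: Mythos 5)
You have proved the wrong statement. Theorem \ref{thm:INPUT} is the quantitative input imported from \cite{DumPan24Perco}: the near-critical estimates $\chi(p)\asymp(p_c-p)^{-1}$, $L(p)\asymp(p_c-p)^{-1/2}$, and the exponential bound on $\varphi_p(\mathbb H_n)$. The paper does not prove it at all --- it is cited as \cite[Corollary~1.4]{DumPan24Perco} and used as a black box; its actual proof is the substantial analysis of that reference (a non-perturbative replacement for the lace expansion), and nothing in the entropic machinery of the present paper reproduces it. What you have written instead is a proof sketch of Theorem \ref{thm: halfspace one arm}, the half-space one-arm estimate $\P_{p_c}[0\connect{\mathbb H\:}\Lambda_n^c]\asymp n^{-3}$.

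Worse, your argument is circular relative to the assigned statement: every step of your sketch invokes \eqref{eq:input chi(p)}, \eqref{eq:input L(p)} or \eqref{eq:input Phi(H_n)} --- i.e.\ the very conclusions of Theorem \ref{thm:INPUT} --- as known inputs (for the half-space susceptibility bound, for the subcritical decay of $\varphi_{p'}(\mathbb H_n)$, and for the triangle condition in your second-moment step). As a proof of Theorem \ref{thm:INPUT} this is vacuous. Even read as a proof of Theorem \ref{thm: halfspace one arm}, it has unresolved issues you acknowledge yourself (the $(\log n)^3$ loss in the upper bound, which the paper avoids by a renormalisation inequality comparing $\theta^{\mathbb H}_n(p')$ to $\frac{1}{16}\theta^{\mathbb H}_{n/2}(p_c)$ rather than by balancing $K$ against $n$), and your lower bound leans on the pointwise estimate $\tau_{p_c}(0,x)\asymp|x|^{2-d}$, which the paper deliberately does not use directly, relying instead on the integrated bounds \eqref{e.PsiCrit}.
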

	\begin{Rem} (1) For full disclosure, let us mention that \eqref{eq:input Phi(H_n)} follows from a straightforward combination of \eqref{eq:input L(p)} and \cite[Remark~3.10]{DumPan24Perco}.
	
	(2) The constant $L_0$ appearing in the statement of Theorem \ref{thm: main result} is exactly the one provided by Theorem \ref{thm:INPUT}. This means that we do not rely on additional perturbative arguments in this paper.
	
	(3) In \cite{DumPan24Perco}, the authors prove that $\tau_{p_c}(0,x)\asymp |x|^{2-d}$. Combined with \cite{AizenmanNewmanTreeGraphInequalities1984}, this allows to get \eqref{eq:input chi(p)}. Additionally, let us mention that it plays a key role in the obtention of \eqref{eq:input L(p)} (and hence \eqref{eq:input Phi(H_n)}). We will not, however, use directly this input in our paper.
	
	(4) The bound \eqref{eq:input Phi(H_n)} was also derived in \cite{HutchcroftMichtaSladePercolationTorusPlateau2023} for both nearest-neighbour and spread-out percolation, assuming an up-to-constant estimate on the critical two-point function. However, their proof relies on inputs from \cite{ChatterjeeHanson}, which in turn relies on \cite{KozmaNachmias}. The main goal of this paper is to presents alternatives to these papers. We only rely on \cite{DumPan24Perco}, which explains why we restrict to spread-out percolation.	
	\end{Rem}
	
Let $\theta_n(p)=\mathbb P_{p}[0\connect{}\Lambda_n^c]$. A combination of Theorem \ref{thm:INPUT} and the van den Berg--Kesten inequality (see \eqref{eq:BK} below) gives the existence of $K_1=K_1(d,L)>0$ such that, for $p'=p_c-\frac{K_1}{n^2}$, one has
\begin{equation}
	\theta_n(p')\leq \varphi_{p'}(\Lambda_{(n/2)- L })\theta_{n/2}(p_c) \leq 2d\varphi_{p'}(\mathbb H_{(n/2)- L })\theta_{n/2}(p_c)\leq \frac{1}{8}\theta_{n/2}(p_c).
\end{equation}
Using this input, applying Lemma \ref{l.EB} as in \cite{DewanMuirhead}, and relying on \eqref{eq:input chi(p)} to get $\chi(p')\lesssim (p_c-p)^{-1}\asymp n^2$ yields the following renormalisation inequality: there exists $K_2=K_2(d,L)>0$ such that, for every $n\geq 2$,
\begin{equation}
	\theta_n(p_c)\leq \frac{1}{8}\theta_{n/2}(p_c)+\frac{K_2}{n}\sqrt{\theta_n(p_c)},
\end{equation}
from which it is easy to deduce the upper bound in Theorem \ref{thm: main result}. The lower bound follows easily from Theorem \ref{thm:INPUT} and a differential inequality on $\theta_n(p)$ obtained in \cite{DuminilTassionNewProofSharpness2016}. We provide the details in Section \ref{sec:onearmfull}.

Finally, we briefly describe the proof of Theorem \ref{thm: halfspace one arm}. The upper-bound is proved in two steps. First, we use the same strategy as the full-space case to bound the quantity $\P_{p_c}[0\connect{\mathbb H\:} \H_{-n}]$, except that the full-space susceptibility $\chi(p')$ needs to be replaced by 
\begin{equation}
	\chi^{\mathbb H}(p'):=\sum_{x\in \mathbb H}\tau^{\mathbb H}_{p'}(0,x).
\end{equation} 
As it turns out, it is an easy corollary of \eqref{eq:input Phi(H_n)} that $\chi^{\mathbb H}(p')\lesssim (p_c-p')^{-1/2}$---this explains why the one-arm exponent goes from $2$ to $3$ when restricting to the half-space. 
The second step uses symmetry and an averaging trick to deduce an analogous bound for $\P_{p_c}[0\connect{\mathbb H\:} \Lambda_n^c]$. 
The proof of the lower bound in Theorem \ref{thm: halfspace one arm} uses a second moment method which---surprisingly---does not rely on any pointwise estimates but rather on a combination of Theorem \ref{thm:INPUT} with resummation tricks. The detailed proof is presented in Section \ref{sec:onearmhalf}.

As observed by Hutchcroft \cite{hutchcroft2022derivation}, the combination of the mean-field bound $\chi(p)\lesssim (p_c-p)^{-1}$ and Lemma \ref{l.EB} yields a short proof of mean-field bounds for other observables, like the percolation density or the \emph{magnetisation}. This provides an alternative to the differential inequality approach of \cite{BarskyAizenmanCriticalExponentPercoUnderTriangle1991}. For sake of completeness, we include the derivation of these bounds in the appendix (see Theorem \ref{thm:MF bounds theta}). 
Let us mention that entropic arguments can also be used to derive mean-field lower bounds on volume and magnetisation starting just from a mean-field lower bound on $\theta(p)$, see \cite{DM_Bololean}.

\paragraph{Acknowledgements.} We thank Gady Kozma for suggesting us to look at \cite{DewanMuirhead}, and Hugo Duminil-Copin for useful discussions. We also thank Tom Hutchcroft, Stephen Muirhead, and Akira Sakai for useful comments. DvE, CG and FS acknowledge support from the ERC grant VORTEX 101043450. RP acknowledges support from the SNSF through a Postdoc.Mobility grant.

\section{Preliminaries}\label{sec:preliminaries}
	
We will make repeated use of the BK inequality (see \cite[Section 2.3]{GrimmettPercolation1999}), which we quickly recall now. For every pair of increasing events $A$ and $B$, we have
\begin{equation}\tag{BK}\label{eq:BK}
	\P_p[A\circ B]\leq \P_p[A]\P_p[B],
\end{equation}
where $A\circ B$ denotes the \emph{disjoint occurrence} of $A$ and $B$, i.e.~the event that there exist two disjoint sets $I$ and $J$ of edges such that the configuration restricted to $I$ (resp.~$J$) is sufficient to decide that $A$ (resp.~$B$) occurs.

In this section, we state the main inputs for our proofs. In particular, we derive several consequences of Theorem \ref{thm:INPUT}. We fix $d>6$, and $L_0$ is given by Theorem \ref{thm:INPUT}.
	
\subsection{Bounds on $\varphi_p$}\label{subsec:correlation_lenght}

We begin with a lower bound on $\varphi_{p_c}(\mathbb H_n)$.

\begin{Lem}\label{lem:lower bound phi} Let $d>6$ and $L\geq L_0$. For every $p\in [0,1]$, and every $n\geq 0$,
\begin{equation}
	\varphi_{p}(\mathbb H_n)\geq \frac{1}{2d}\varphi_{p}(\Lambda_n).
\end{equation}
In particular, $\varphi_{p_c}(\mathbb H_n)\geq \tfrac{1}{2d}$.
\end{Lem}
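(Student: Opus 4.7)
The plan is to exploit the symmetry of $\Lambda_n$ under the $2d$ reflections that fix the origin, together with the obvious monotonicity of the restricted two-point function in its restriction set. The main insight is that $\Lambda_n$ has ``$2d$ faces'' (one in each $\pm \mathbf{e}_i$ direction), each of which contributes equally to $\varphi_p(\Lambda_n)$, whereas $\mathbb{H}_n$ looks like a single such face extended to an infinite half-space.

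More precisely, I would define, for $i\in\{1,\dots,d\}$ and a sign $\pm$, the ``face contribution''
\begin{equation*}
F_i^{\pm}(p) := p\sum_{\substack{x\in\Lambda_n,\: y\sim x\\ \pm y_i > n}}\tau^{\Lambda_n}_p(0,x).
\end{equation*}
Every boundary pair $(x,y)$ with $x\in \Lambda_n$, $y\notin\Lambda_n$, $x\sim y$ satisfies $|y_j|>n$ for at least one coordinate $j$ and sign, so it is counted in at least one $F_i^{\pm}$. This yields the union bound
\begin{equation*}
\varphi_p(\Lambda_n) \leq \sum_{i,\pm} F_i^{\pm}(p).
\end{equation*}
Lattice symmetries (the reflections and coordinate permutations fixing $0$ and $\Lambda_n$) preserve $\tau^{\Lambda_n}_p(0,x)$, so all $2d$ face contributions are equal. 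Hence
\begin{equation*}
F_1^-(p) \geq \tfrac{1}{2d}\varphi_p(\Lambda_n).
\end{equation*}

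Next, I would compare $F_1^-(p)$ with $\varphi_p(\mathbb{H}_n)$. For any pair $(x,y)$ contributing to $F_1^-$, one has $x\in \Lambda_n\subset \mathbb{H}_n$ and $y_1\leq -n-1$, so $y\notin \mathbb{H}_n$; hence $(x,y)$ is a boundary pair for $\mathbb{H}_n$. Since $\Lambda_n\subset \mathbb{H}_n$, monotonicity of restricted two-point functions gives $\tau^{\Lambda_n}_p(0,x)\leq \tau^{\mathbb{H}_n}_p(0,x)$. Combining the two inequalities yields $F_1^-(p)\leq \varphi_p(\mathbb{H}_n)$, and therefore
\begin{equation*}
\varphi_p(\mathbb{H}_n) \geq \tfrac{1}{2d}\varphi_p(\Lambda_n).
\end{equation*}
The ``in particular'' part follows immediately from the lower bound $\varphi_{p_c}(\Lambda_n)\geq 1$ recalled from \cite{DuminilTassionNewProofSharpness2016} (see the footnote in Section~\ref{sec:introStrat}).

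The only subtle point is getting the direction of the face inequality right: if one naively tried to partition boundary pairs among the faces by assigning each $y$ to a canonical coordinate, the symmetry argument would fail because such an assignment breaks the symmetry. Using the union bound instead makes the inequality $\varphi_p(\Lambda_n)\leq \sum_{i,\pm}F_i^{\pm}$, which (by symmetry) goes the right way to lower-bound a single face. I do not anticipate any serious obstacle beyond being careful about this point and about the direction of the monotonicity $\tau^{\Lambda_n}_p\leq \tau^{\mathbb{H}_n}_p$.
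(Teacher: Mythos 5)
Your proof is correct and is essentially the paper's argument: the paper simply asserts ``by symmetry one has $\varphi_p(\Lambda_n)\leq 2d\,\varphi_p(\mathbb H_n)$'' and you have filled in exactly the intended details (union bound over the $2d$ faces, equality of face contributions by lattice symmetry, and monotonicity $\tau^{\Lambda_n}_p\leq\tau^{\mathbb H_n}_p$ together with the observation that face-$(1,-)$ boundary pairs are boundary pairs of $\mathbb H_n$), with the ``in particular'' part following from $\varphi_{p_c}(\Lambda_n)\geq 1$ as in the paper.
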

\begin{proof} Observe that by symmetry one has
\begin{equation}
	\varphi_{p}(\Lambda_n)\leq 2d \varphi_{p}(\mathbb H_n).
\end{equation}
The second part of the statement follows from the bound of \cite{DuminilTassionNewProofSharpness2016}: $\varphi_{p_c}(\Lambda_n)\geq 1$.
\end{proof}
For a set $S\subset \mathbb Z^d$, the (inner vertex) boundary of $S$ is $\partial S:=\{x\in S:~\exists y\notin S \text{ s.t. } \vert x-y\vert_1=1\}$ (where $|\cdot|_1$ is the $\ell^1$ norm on $\mathbb R^d$).
For convenience, we sometimes consider a variant of $\varphi_{p}$ defined as follows: for $p\in [0,1]$ and $S\subset \mathbb Z^d$ containing $0$,
	\begin{equation}
		\psi_{p}(S):=\sum_{x\in \partial S} \tau_{p}^{S}(0,x).
	\end{equation}
This quantity is often referred to as the \emph{expected number of pioneers} (of $S$), see for instance \cite{HutchcroftMichtaSladePercolationTorusPlateau2023}. Observe that
\begin{equation}\label{eq:comparison psi phi}
	p\psi_{p}(S)\leq \varphi_{p}(S) \lesssim \psi_{p}(S).
\end{equation}
Indeed, the lower bound is straightforward, while the upper bound follows from the following observation: 
if $x\in S \setminus \p S$ is such that there exists $y\notin S$ with $x\sim y$, then there is $z \in \p S$ such that $x\sim z$ and $z\sim y$. Now, the FKG inequality (see \cite{GrimmettPercolation1999}) implies that
\begin{equation}\label{eq:finite energy arg}
	p\tau_{p}^{S}(0,x)=\mathbb P_p[e=\{x,z\}\textup{ is open}]\mathbb P_p[0\connect{S}x]\leq \tau_{p}^{S}(0,z).
\end{equation}
Since there are at most $C\,  L^d \times L^d$ such pairs $ \{x,y\}$ with $x\in S\setminus \p S, y\notin S, x\sim y$ and which are  sent to the same boundary point $z\in \p S$, this implies the upper bound. 

The following result will be useful, and is an easy consequence of Lemma \ref{lem:lower bound phi}, \eqref{eq:input Phi(H_n)}, and \eqref{eq:comparison psi phi}.
\begin{Coro}\label{coro:bounds on psi}
 Let $d>6$ and $L\geq L_0$. There exist $c,C>0$ such that, for every $p\leq p_c$, and every $n\geq 0$, 
\begin{align}
	 p \psi_p(\Lambda_n) & \leq   \varphi_p(\Lambda_n)  \leq C \exp\left( - c (p_c-p)^{1/2} n\right),\label{e.PsiL} \\
	 p \psi_p(\H_n) & \leq   \varphi_p(\H_n)  \leq C \exp\left( - c (p_c-p)^{1/2} n\right)\label{e.PsiH}\,
\end{align}
and
\begin{equation}\label{e.PsiCrit}
	c\leq \psi_{p_c}(\mathbb H_n)\leq C.
\end{equation}
\end{Coro}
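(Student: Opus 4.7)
The plan is that this corollary is essentially a bookkeeping exercise assembling Theorem \ref{thm:INPUT}, Lemma \ref{lem:lower bound phi}, and the comparison \eqref{eq:comparison psi phi} between $\psi_p$ and $\varphi_p$. Nothing new needs to be invented; the main care is just to keep track of which side of each inequality goes where.

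First, the left-hand inequalities in both \eqref{e.PsiL} and \eqref{e.PsiH} are immediate: they are literally the lower bound $p\psi_p(S)\leq \varphi_p(S)$ from \eqref{eq:comparison psi phi}, applied to $S=\Lambda_n$ and $S=\mathbb H_n$ respectively. So the real content is the upper bounds on $\varphi_p$.

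Next, for the upper bound in \eqref{e.PsiH}, I would just invoke \eqref{eq:input Phi(H_n)} from Theorem \ref{thm:INPUT} with constants $c=c_1$ and $C=C_1$. For the upper bound in \eqref{e.PsiL}, I would combine \eqref{eq:input Phi(H_n)} with the reverse form of Lemma \ref{lem:lower bound phi}, namely $\varphi_p(\Lambda_n)\leq 2d\,\varphi_p(\mathbb H_n)$, which was established in the course of its proof. This yields
\begin{equation*}
\varphi_p(\Lambda_n)\;\leq\;2d\,\varphi_p(\mathbb H_n)\;\leq\;2dC_1\exp\bigl(-c_1(p_c-p)^{1/2}n\bigr),
\end{equation*}
and absorbing the factor $2d$ into the constant $C$ gives the claimed bound.

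Finally, for \eqref{e.PsiCrit} both inequalities come from pairing \eqref{eq:comparison psi phi} with what we already know about $\varphi_{p_c}(\mathbb H_n)$. The upper bound follows from $\psi_{p_c}(\mathbb H_n)\leq p_c^{-1}\varphi_{p_c}(\mathbb H_n)\leq p_c^{-1}C_1$, using \eqref{eq:input Phi(H_n)} specialised at $p=p_c$ (which is allowed since the bound there holds for all $p\leq p_c$). For the lower bound, I would use the other half of \eqref{eq:comparison psi phi}, i.e.\ $\varphi_{p_c}(\mathbb H_n)\lesssim \psi_{p_c}(\mathbb H_n)$, together with the second part of Lemma \ref{lem:lower bound phi}, which gives $\varphi_{p_c}(\mathbb H_n)\geq \tfrac{1}{2d}$. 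The resulting constant $c$ depends only on $d$ and $L$, as required. There is no serious obstacle here; the only mild subtlety is remembering that $p_c\in(0,1)$ is bounded away from $0$ (by a $(d,L)$-dependent constant) so that the factor $p_c^{-1}$ in the upper bound of \eqref{e.PsiCrit} is harmless.
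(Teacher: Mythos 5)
Your proposal is correct and follows essentially the same route as the paper: the left-hand inequalities and the lower bound in \eqref{e.PsiCrit} come from \eqref{eq:comparison psi phi}, the upper bounds on $\varphi_p$ come from \eqref{eq:input Phi(H_n)} together with the symmetry inequality $\varphi_p(\Lambda_n)\leq 2d\,\varphi_p(\mathbb H_n)$ from the proof of Lemma \ref{lem:lower bound phi}, and \eqref{e.PsiCrit} combines these with Lemma \ref{lem:lower bound phi}. Your write-up is just a more explicit version of the paper's two-line argument, with the correct observation that $p_c^{-1}$ is a harmless $(d,L)$-dependent constant.
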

\begin{proof} Recall that by symmetry one has $\varphi_p(\Lambda_n)\leq 2d\varphi_p(\mathbb H_n)$. Combining this observation with \eqref{eq:input Phi(H_n)} gives the upper bounds in \eqref{e.PsiL} and \eqref{e.PsiH}. The lower bounds in these equations follow from \eqref{eq:comparison psi phi}. Finally, \eqref{e.PsiCrit} follows from \eqref{eq:input Phi(H_n)} and Lemma \ref{lem:lower bound phi} thanks to \eqref{eq:comparison psi phi}.
\end{proof}

\subsection{The sharp length}
The quantity $L(p)$ of \eqref{e.Lp} was defined by looking at the first $k\geq 1$ for which $\varphi_p(\Lambda_k)$ drops below $1/e^2$. It can be more practical to consider a slightly different definition in which we rather look at the first $k$ for which there exists a $S\subset \Lambda_k$ such that $\varphi_p(S)$ ``drops''. More precisely, the \emph{sharp length} $\tilde{L}(p)$ is defined, for every $p<p_c$, by
\begin{equation}\label{e.Lp2}
	\tilde L(p)  \coloneqq \min \{k\geq1:~\exists  S \subset \Lambda_k \textup{ containing $0$ such that } \varphi_p(S)\leq 1/e^2\}\,.
\end{equation}
Our next result shows that $\tilde{L}(p)$ and $L(p)$ have the same behaviour as $p$ approaches $p_c$.

\begin{Prop}\label{prop:equivalence of sharp length}
 Let $d>6$ and $L\geq L_0$. There exist $c,C>0$ such that, for every $p<p_c$,
\begin{equation}
	c(p_c-p)^{-1/2}\leq \tilde{L}(p)\leq C(p_c-p)^{-1/2}.
\end{equation}
\end{Prop}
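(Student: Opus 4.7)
The plan is to establish the comparison $\tilde L(p) \asymp L(p)$, from which the proposition follows directly via Theorem \ref{thm:INPUT}. The upper bound is immediate: taking $S=\Lambda_{L(p)}$ in \eqref{e.Lp2} gives $\tilde L(p) \leq L(p) \leq C(p_c-p)^{-1/2}$ by \eqref{eq:input L(p)}. Everything therefore reduces to proving the converse bound $L(p) \leq C_0\, \tilde L(p)$ for some $(d,L)$-constant $C_0$.

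Write $k=\tilde L(p)$ and fix $S \subset \Lambda_k$ with $0 \in S$ and $\varphi_p(S) \leq 1/e^2$. The heart of the argument is a BK-type iteration on $\varphi_p(\mathbb{H}_n)$. Conditioning on the first edge $(y_1,z_1)$ along which the open cluster of $0$ exits $S$ and invoking the BK inequality (as in the Duminil-Copin--Tassion sharpness argument \cite{DuminilTassionNewProofSharpness2016}) yields, for every $n \geq k+L$,
\begin{equation*}
\varphi_p(\mathbb{H}_n) \;\leq\; \varphi_p(S)\cdot \sup_{z_1}\, \varphi_p(\mathbb{H}_n - z_1),
\end{equation*}
where the supremum runs over $z_1 \notin S$ adjacent to $S$, which necessarily lie in $\Lambda_{k+L}$. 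Since $\mathbb{H}_n-z_1=\mathbb{H}_{n+z_{1,1}}$ by translation invariance (with $z_{1,1}$ the first coordinate of $z_1$), iterating this relation $m$ times with the successively translated copies $S+z_1,\ S+z_1+z_2,\ldots$ of $S$ (which remain contained in $\mathbb{H}_n$ as long as $n \geq m(k+L)$) produces
\begin{equation*}
\varphi_p(\mathbb{H}_n) \;\leq\; \varphi_p(S)^m \cdot \sup_{m' \geq 0}\varphi_p(\mathbb{H}_{m'}).
\end{equation*}
By monotonicity of $\varphi_p$ in $p$, together with Corollary \ref{coro:bounds on psi} and the comparison $\varphi_p \lesssim \psi_p$, the latter supremum is bounded by a $(d,L)$-constant $C_\psi$ uniformly for $p \leq p_c$.

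Choosing $m_0=m_0(d,L)$ as the smallest integer with $(1/e^2)^{m_0} C_\psi \leq 1/(2d\, e^2)$ and setting $n_0 := m_0(k+L)$, the iteration yields $\varphi_p(\mathbb{H}_{n_0}) \leq 1/(2d\, e^2)$; Lemma \ref{lem:lower bound phi} then gives $\varphi_p(\Lambda_{n_0}) \leq 2d\,\varphi_p(\mathbb{H}_{n_0}) \leq 1/e^2$, whence $L(p) \leq n_0 \leq C_0\, \tilde L(p)$, as required. The degenerate regime of bounded $k$ (i.e.~$k \leq L$) can be absorbed into $C_0$: since there are only finitely many $S \subset \Lambda_L$ containing $0$ and $\varphi_p(S) \to \varphi_{p_c}(S) \geq 1$ as $p \uparrow p_c$ for each such $S$ (by \cite{DuminilTassionNewProofSharpness2016}), the condition $\tilde L(p) \leq L$ forces $p$ to stay bounded away from $p_c$, and the claimed lower bound becomes automatic. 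The main technical hurdle I foresee is the careful bookkeeping of the iterated first-exit argument with translations: at each step one must verify that the target points $y \in \partial \mathbb{H}_n$ actually lie outside the current translated copy $S+z_1+\cdots+z_i$ (so that another iteration may be performed), and ensure that the BK decoupling produces a factor of $\tau_p^{\mathbb{H}_n}$ rather than the larger unrestricted $\tau_p$ on the right-hand side, so that the recursion closes on itself.
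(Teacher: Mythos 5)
Your proof is correct and follows essentially the same route as the paper: reduce to $\tilde L(p)\asymp L(p)$, note the trivial inequality $\tilde L(p)\leq L(p)$, and for the converse iterate the first-exit/BK decomposition through $O(1)$ translated copies of $S$ to pick up a factor $\varphi_p(S)^{m}\leq e^{-2m}$, using the uniform bound on the critical half-space quantity and Lemma \ref{lem:lower bound phi} to close the argument. The only (harmless) difference is that you run the recursion on $\varphi_p(\mathbb H_n)$ and convert to $\varphi_p(\Lambda_{n_0})$ at the end, while the paper iterates on $\varphi_p(\Lambda_N)$ and invokes the half-space bound only for the final factor $\psi_{p_c}(\Lambda_N-v_K)\lesssim 1$; your ``degenerate regime'' discussion is unnecessary since the bound $L(p)\leq m_0(k+L)\lesssim k$ already holds for all $k\geq 1$.
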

\begin{proof} By \eqref{eq:input L(p)}, it suffices to show that $\tilde{L}(p)\asymp L(p)$. First, observe that (by definition) $\tilde L(p) \leq L(p)$. 
We now prove that $\tilde{L}(p)\gtrsim L(p)$. 

Let $S\subset \Lambda_{\tilde{L}(p)}$ be such that $\varphi_p(S)\leq 1/e^2$. Let $N= (K+1) (\tilde{L}(p)+L)$ for some large enough constant $K\geq 1$ to be chosen later. We will show that $L(p)\leq N$, which will conclude the proof. 
Let $x\in \partial \Lambda_N$ and observe that
\begin{equation}
\{0\connect{\Lambda_N\:} x\} \subset \bigcup_{\substack{u_1\in S\\ v_1\notin S\\ u_1\sim v_1}} \{0\connect{S\:} u_1\} \circ \{u_1v_1 \text{ is open}\} \circ \{v_1 \connect{\Lambda_N\:} x\}.
\end{equation}
 Using \eqref{eq:comparison psi phi} and \eqref{eq:BK}, we obtain 
\begin{equation}
\varphi_p(\Lambda_N)\lesssim  \psi_p(\Lambda_N) \leq \sum_{x\in\partial\Lambda_N}  \sum_{\substack{u_1\in S\\ v_1\notin S \\ u_1\sim v_1}} \tau_p^S(0,u_1)\,p\,  \tau_p^{\Lambda_N}(v_1,x).
\end{equation}
By iterating this expansion $K$ times, we get (with the convention $v_0=0$)
\begin{equation}\label{eq:iterated-phi}
	\varphi_p(\Lambda_N)\lesssim  \sum_{x\in\partial\Lambda_N}  
	\sum_{\substack{u_1\in S\\ v_1\notin S \\ u_1\sim v_1}} \tau_p^S(v_0,u_1)\,p
	\ldots\sum_{\substack{u_{K}\in (S+v_{K-1})\\ v_{K}\notin (S+v_{K-1}) \\ u_K\sim v_K}} \tau_p^S(v_{K-1},u_K)\,p\, \tau_p^{\Lambda_N}(v_K,x).\end{equation}
Observe that by construction, any $v_K$ as above must lie in $\Lambda_N$, and 
\begin{equation}\label{eq:bound_vK}
\sum_{x\in  \p \Lambda_N} \tau_p^{\Lambda_N}(v_K,x)=\psi_{p}(\Lambda_N-v_K)\leq \psi_{p_c}(\Lambda_N-v_K)   
\leq 2dp_c^{-1} \max_{n\geq 0}\varphi_{p_c}(\mathbb H_n)\lesssim 1,
\end{equation}
where in the second inequality we used \eqref{eq:comparison psi phi}, and in the last one \eqref{eq:input Phi(H_n)}. Plugging \eqref{eq:bound_vK} in \eqref{eq:iterated-phi} and using translation invariance, we obtain
\begin{equation}\label{}
\varphi_p(\Lambda_N)  \lesssim  \varphi_p(S)^K \leq e^{-2K}.
\end{equation}
If $K$ is large enough, we deduce that $\varphi_p(\Lambda_N)\leq 1/e^2$, so that $L(p)\leq N\lesssim \tilde{L}(p)$. This concludes the proof.
\end{proof}

\subsection{Bounds on the half-space susceptibility}

In this subsection, we prove a useful bound in our proof of Theorem \ref{thm: halfspace one arm}. We begin with a definition. If $p\in[0,1]$, the half-space susceptibility is defined by
\begin{equation}
	\chi^{\mathbb H}(p):=\sum_{x\in \mathbb H}\tau_{p}^{\mathbb H}(0,x).
\end{equation}
The next result should be compared to \eqref{eq:input chi(p)}.

\begin{Prop}\label{prop:half-space_susceptibility}
Let $d>6$ and $L\geq L_0$. There exist $c_2,C_2>0$ such that, for every $\tfrac{p_c}{2}\leq p<p_c$,
\begin{equation}\label{eq:half-space_susceptibility}
	c_2(p_c-p)^{-1/2}\leq\chi^{\mathbb H}(p)\leq C_2(p_c-p)^{-1/2}.
\end{equation}
\end{Prop}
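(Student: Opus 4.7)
The plan is to reduce the proposition to the identity
\begin{equation}\label{eq:chiH_plan}
\chi^{\mathbb H}(p) = \sum_{n\geq 0} \psi_p(\mathbb H_n),
\end{equation}
after which both bounds of \eqref{eq:half-space_susceptibility} follow immediately from Corollary \ref{coro:bounds on psi} and the definition of $L(p)$.

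To obtain \eqref{eq:chiH_plan}, I would exploit the transverse translation invariance of $\mathbb H$. Writing $y = (n, y') \in \mathbb H$ with $n\geq 0$ and $y' \in \mathbb Z^{d-1}$, the shift $x \mapsto x - (0, y')$ preserves $\mathbb H$, so combined with the symmetry $\tau_p^{\mathbb H}(a,b)=\tau_p^{\mathbb H}(b,a)$ it rewrites the inner sum as
\[
\sum_{y' \in \mathbb Z^{d-1}}\tau_p^{\mathbb H}(0, (n, y'))
= \sum_{z \in \partial \mathbb H}\tau_p^{\mathbb H}(n\mathbf{e}_1, z).
\]
Translating by $-n\mathbf{e}_1$ sends $\mathbb H$ to $\mathbb H_n$ and $\partial\mathbb H = \{z_1 = 0\}$ to $\partial \mathbb H_n = \{z_1 = -n\}$, so the right-hand side equals $\psi_p(\mathbb H_n)$. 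Summing over $n \geq 0$ gives \eqref{eq:chiH_plan}.

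For the upper bound, I would combine \eqref{eq:comparison psi phi} with $p \geq p_c/2$ and the exponential estimate \eqref{e.PsiH} to obtain $\psi_p(\mathbb H_n) \lesssim \varphi_p(\mathbb H_n) \leq C\exp(-c(p_c-p)^{1/2}n)$, and then sum the resulting geometric series, which yields $\chi^{\mathbb H}(p) \lesssim (p_c-p)^{-1/2}$. For the lower bound, for every $n < L(p)$ the definition \eqref{e.Lp} gives $\varphi_p(\Lambda_n)>1/e^2$, and Lemma \ref{lem:lower bound phi} together with \eqref{eq:comparison psi phi} upgrades this to $\psi_p(\mathbb H_n) \gtrsim 1$ uniformly; summing over $0 \leq n < L(p)$ and applying \eqref{eq:input L(p)} yields $\chi^{\mathbb H}(p) \gtrsim L(p) \gtrsim (p_c-p)^{-1/2}$.

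The only non-routine point is establishing the identity \eqref{eq:chiH_plan}; once it is in hand, both inequalities of \eqref{eq:half-space_susceptibility} fall out of the already-established estimates on $\psi_p$ and $\varphi_p$, with no further BK-type or geometric input required. The small bit of bookkeeping is to verify that the inner $\ell^1$-boundaries match cleanly under the $-n\mathbf{e}_1$ translation, which holds because $\partial \mathbb H_n = \{x_1 = -n\}$ for every $n\geq 0$.
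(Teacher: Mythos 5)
Your proposal is correct and follows essentially the same route as the paper: the same resummation identity $\chi^{\mathbb H}(p)=\sum_{n\geq 0}\psi_p(\mathbb H_n)$, the same use of Corollary \ref{coro:bounds on psi} and \eqref{eq:comparison psi phi} for the upper bound, and the same lower bound via $\psi_p(\mathbb H_n)\gtrsim \varphi_p(\Lambda_n)\gtrsim 1$ for $n<L(p)$ together with \eqref{eq:input L(p)}. The only difference is that you spell out the translation-invariance bookkeeping behind the identity, which the paper leaves implicit.
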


\begin{proof} By translation invariance, one has
\begin{equation}\label{eq:phalfspace chi 1}
	\chi^{\mathbb H}(p)= \sum_{k\geq 0}\psi_p(\mathbb H_k).
\end{equation}
Combining \eqref{eq:comparison psi phi} and Lemma \ref{lem:lower bound phi}, we find that
\begin{equation}
	\psi_p(\mathbb H_k)\gtrsim \varphi_{p}(\Lambda_k).
\end{equation}
In particular, when $k<L(p)$, one has $\psi_p(\mathbb H_k)\gtrsim 1$. Plugging this observation in \eqref{eq:phalfspace chi 1} yields
\begin{equation}\label{eq:phalfspace chi 2}
	\chi^{\mathbb H}(p)\gtrsim L(p) \gtrsim (p_c-p)^{-1/2},
\end{equation}
where the second inequality follows from \eqref{eq:input L(p)}. Using Corollary \ref{coro:bounds on psi}, we find $c,C>0$ such that, for every $p_c/2\leq p<p_c$, and every $k\geq 0$, 
\begin{equation}\label{eq:phalfspace chi 3}
	\psi_{p}(\mathbb H_k)\leq C(p_c/2)^{-1}\exp\left(-c(p_c-p)^{-1/2}k\right).
\end{equation}
Plugging \eqref{eq:phalfspace chi 3} in \eqref{eq:phalfspace chi 1} gives
\begin{equation}
	\chi^{\mathbb H}(p)\lesssim \sum_{k\geq 0}\exp\left(-c(p_c-p)^{-1/2}k\right)\lesssim (p_c-p)^{-1/2}.
	\end{equation}
	This concludes the proof.
\end{proof}

\section{One-arm probability in the full-space}\label{sec:onearmfull}

In this section we prove Theorem~\ref{thm: main result}. We fix $d>6$ and $L\geq L_0$, where $L_0$ is given by Theorem \ref{thm:INPUT}. We begin with a proof of the (more interesting) upper bound.

\begin{proof}[Proof of the upper bound in \eqref{eq:main_bulk}] Assume first that $n\geq 2L$.
	We let 
	\begin{equation}
	\theta_n(p)=\mathbb{P}_{p}[0\connect{}\Lambda_n^c].
	\end{equation} 
	By applying the entropic bound of Lemma \ref{l.EB} with $\Lambda=\mathbb Z^d$ and $\mathcal A:=\{0\connect{}\Lambda_n^c\}$, we obtain that for any $p_c/2\leq p'<p_c$, 
	\begin{align}\label{e.EB}
		\theta_n(p_c) - \theta_n(p') \leq C_0 (p_c-p')  \sqrt{ \theta_n(p_c)  \chi(p') }\,,
	\end{align}
	where we used the fact that $\theta_n(p') \leq \theta_n(p_c)$. 
	Let us now choose $p'$, sufficiently far in the near-critical window, i.e. 
	\begin{align*}\label{}
		p' := p_c - K n^{-2}\,,
	\end{align*}
	where $K$ is a constant chosen large enough so that, by Corollary \ref{coro:bounds on psi}, one has $\varphi_{p'}(\Lambda_{(n/2)-L})\leq 1/8$. Note that, by decomposing an open path from $0$ to $\Lambda_n^c$ according to the first edge exiting $\Lambda_{(n/2)-L}$, we have
	\begin{align}
	\begin{split}
		\{0\connect{}\Lambda_n^c\} &\subset \bigcup_{\substack{x\in  \Lambda_{(n/2)-L}\\y \notin \Lambda_{(n/2)-L}\\x \sim y}} \{0\connect{\Lambda_{(n/2)-L}\:} x\} \circ \{xy \textup{ open}\}\circ \{y\connect{}\Lambda_n^c\} \\
		&\subset \bigcup_{\substack{x\in  \Lambda_{(n/2)-L}\\y \notin \Lambda_{(n/2)-L}\\x \sim y}}\{0\connect{\Lambda_{(n/2)-L}\:} x\} \circ \{xy \textup{ open}\}\circ  \{y\connect{}(\Lambda_{n/2} + y)^c\},
	\end{split}
	\end{align}
	so by \eqref{eq:BK} and translation invariance, we obtain
	\begin{align}
	\begin{split}
		\theta_n(p') &\leq \sum_{\substack{x\in  \Lambda_{(n/2)-L}\\y \notin \Lambda_{(n/2)-L}\\x \sim y}} \mathbb{P}_{p_c}[0\connect{\Lambda_{(n/2)-L}\:} x] p' \mathbb{P}_{p'}[y\connect{}(\Lambda_{n/2}+y)^c]\\ 
		& = \varphi_{p'}(\Lambda_{(n/2)-L})  \theta_{n/2}(p') \leq \frac 1 8  \theta_{n/2}(p_c)\,,
	\end{split}
	\end{align}
	where in the last line we used that $\theta_{n/2}(p')\leq \theta_{n/2}(p_c)$.
	Plugging this into~\eqref{e.EB}, and recalling that $\chi(p')\leq C_1K^{-1} n^{2}$ by Theorem~\ref{thm:INPUT}, gives, for every $n\geq 2L$
	\begin{equation}\label{eq:UB_bulk_renormalization}
		\theta_n(p_c)  \leq  \frac{1}{8}\theta_{n/2}(p_c) + C_0 (C_1K)^{1/2}  n^{-1}\sqrt{ \theta_n(p_c)   }\,.
	\end{equation}
	Let $k_0\geq 1$ be such that $2^{k_0}\geq 2L>2^{k_0-1}$.
	We claim that the inequality above implies that, for every $k\geq k_0$,
	\begin{equation}\label{eq:UB_bulk_induction}
		\theta_{2^{k}}(p_c)   \leq \frac {A} {(2^{k})^2},
	\end{equation}
	where $A:=\max\{4C_0^2C_1K,(2^{k_0})^2\}$. This in turn readily implies the desired bound. Indeed, for every $n\geq 1$, if $n\leq 2^{k_0}$, then $\theta_n(p_c)\leq 1\leq An^{-2}$ by choice of $A$. Otherwise, there is $k\geq k_0$ such that $2^k\leq n < 2^{k+1}$, so that $\theta_n(p_c)\leq \theta_{2^{k}}(p_c)   \leq A 2^{-2k} \leq 4An^{-2}$, as desired.
	
	Let us now prove that \eqref{eq:UB_bulk_induction} holds by induction. First, since $\theta_{2^{k_0}}(p_c)\leq1 \leq A/(2^{k_0})^2$, \eqref{eq:UB_bulk_induction} is trivially true for $k=k_0$. Assuming it is true for a given $k\geq k_0$ and applying \eqref{eq:UB_bulk_renormalization} for $n=2^{k+1}$, we obtain
		\begin{equation}
		\theta_n(p_c)  \leq  \frac{A}{2n^2} +  \frac {\sqrt{A}}{2n}  \sqrt{ \theta_n(p_c)   } \implies \left( \sqrt{\theta_n(p_c)}  -  \frac{\sqrt{A}}{4n} \right)^2\leq \frac{9A}{16n^2} \implies \theta_n(p_c)\leq \frac{A}{n^2}\,.
		\end{equation}
	This concludes the proof.
\end{proof}

 We now prove the lower bound in Theorem \ref{thm: main result}. Our argument only uses the lower bound $\tilde{L}(p)\gtrsim(p_c-p)^{-1/2}$. We note that an alternative---and technically simpler---proof could be obtained via a second-moment method, relying on pointwise estimates on the critical two-point function (as first carried out in \cite{sakai2004mean}).

\begin{proof}[Proof of the lower bound in \eqref{eq:main_bulk}] 
	Again, we let $\theta_n(p)=\mathbb P_p[0\connect{}\Lambda_n^c]$.
	We will use the following differential inequality, which is proved in \cite[Lemma~1.4]{DuminilTassionNewProofSharpness2016}: there exists a constant $c=c(d,L)>0$ such that for every $p\in [p_c/2,1]$ and $n\geq1$, one has 
	\begin{equation}\label{eq:diff_ineq_theta}
		\frac{\mathrm{d}}{\mathrm{d}p}\theta_n(p)\geq c\inf_{0\in S\subset \Lambda_n}\varphi_p(S)(1-\theta_n(p))\,.
	\end{equation}
	Recall from Proposition~\ref{prop:equivalence of sharp length} that $\tilde L(p) \asymp (p_c-p)^{-1/2}$.  By the definition of $\tilde{L}(p)$, this implies that there exists $c_3>0$ such that for every $n$ large enough and every $p\in (p_c- \frac{c_3}{n^2}, p_c]$, one has 
	\begin{equation}\label{eq:lb phi(S)}
		\varphi_p(S) \geq 1/e^{2}\,, ~~~ \forall \,   S \subset \Lambda_n\, \textup{ containing $0$}.
	\end{equation}
	Moreover, by the upper bound on $\theta_n(p_c)$ we just proved, the sequence $(\theta_n(p_c))_{n\geq 1}$ tends to $0$, and therefore for every $n$ large enough and $p\leq p_c$, one has $1-\theta_n(p)\geq \tfrac{1}{2}$. Plugging this observation and \eqref{eq:lb phi(S)} into \eqref{eq:diff_ineq_theta} gives $c_2>0$ such that, for every $n$ large enough, and every $p\in (p_c-\tfrac{c_3}{n^2}, p_c]$, one has
	\begin{equation}
			\frac{\mathrm{d}}{\mathrm{d}p}\theta_n(p)\geq c_4\,. 
	\end{equation}
	Integrating this over $p\in (p_c-\tfrac{c_3}{n^2}, p_c]$, we obtain, for every $n$ large enough,
	\begin{equation}\label{}
		\theta_n(p_c) \geq \frac{c_3c_4}{n^2}\,.
	\end{equation}
	This concludes the proof.
\end{proof}

\section{One-arm probability in the half-space}\label{sec:onearmhalf}

We now turn to the proof of Theorem \ref{thm: halfspace one arm}.  We fix $d>6$ and $L\geq L_0$, where $L_0$ is given by Theorem \ref{thm:INPUT}.

\subsection{Proof of the lower bound}

Our goal in this section is to prove the lower bound in \eqref{eq:main_half-space}.
The proof is based on a second moment method. This would be a relatively easy task if we knew precise estimates on the two point function restricted to $\H$. However, such estimates are difficult to obtain. In fact, they were first proved \cite{ChatterjeeHanson} (see also \cite{ChatterjeeHansonSosoe2023subcritical}), through a delicate analysis which is based, in particular, on the results of \cite{KozmaNachmias}.
Instead of relying on such pointwise bounds on the restricted two-point function, we provide a softer proof which only uses the integrated estimate provided by \eqref{e.PsiCrit} as an input. 

\medskip

\noindent
\begin{proof}[Proof of the lower bound in \eqref{eq:main_half-space}] Let $n\geq 1$.
	Consider the $n$-slab defined as
	\begin{align}\label{}
		\mathbb{S}_n: = \H \setminus \H_{-(n+1)}.
	\end{align}
	We will apply the second moment method to the following random variable 
	\begin{align}\label{}
		\mathcal N_n:= \# \{ x \in  \p \H_{-n}:  0\connect{\mathbb S_n\:} x  \}.
	\end{align}
	
	\begin{Lem}\label{l.N2}
		For every $n\geq1$,
		\begin{align}\label{}
			\Eb{\mathcal N_n^2} \lesssim n.
		\end{align}
	\end{Lem}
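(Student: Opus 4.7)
My plan is to apply a standard tree-graph / BK decomposition to $\mathbb{E}[\mathcal{N}_n^2]$ and then to reduce each of the resulting sums to the uniformly bounded quantity $\psi_{p_c}(\mathbb{H}_k)$ supplied by~\eqref{e.PsiCrit}. Decomposing the event $\{0\connect{\mathbb{S}_n\:}x\}\cap\{0\connect{\mathbb{S}_n\:}y\}$ through a common ``branching'' vertex $z\in\mathbb{S}_n$ and applying~\eqref{eq:BK} inside the slab, I obtain
\[
\mathbb{E}[\mathcal{N}_n^2]\;\leq\; \sum_{z\in\mathbb{S}_n} \tau_{p_c}^{\mathbb{S}_n}(0,z)\,\Bigl(\sum_{x\in\partial\mathbb{H}_{-n}}\tau_{p_c}^{\mathbb{S}_n}(z,x)\Bigr)^{\!2}.
\]
It then suffices to show that the inner sum is $O(1)$ uniformly in $z\in\mathbb{S}_n$ and that the outer sum is $O(n)$.

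For the inner sum, I would use the inclusion $\mathbb{S}_n\subset\{y:y_1\leq n\}$, translate so that $z$ becomes the origin, and reflect across the hyperplane $\{y_1=0\}$; these operations identify the sum with $\psi_{p_c}(\mathbb{H}_{n-z_1})$, which is at most $C$ by~\eqref{e.PsiCrit}. For the outer sum, I would decompose by depth $k=z_1\in\{0,\dots,n\}$ and use $\mathbb{S}_n\subset\mathbb{H}$ to bound each layer by $\sum_{z_\perp\in\mathbb{Z}^{d-1}}\tau_{p_c}^{\mathbb{H}}(0,(k,z_\perp))$. The crucial step is the identity
\[
\sum_{z_\perp\in\mathbb{Z}^{d-1}}\tau_{p_c}^{\mathbb{H}}(0,(k,z_\perp))\;=\;\psi_{p_c}(\mathbb{H}_k),
\]
which follows by combining the symmetry $\tau^{\mathbb{H}}(a,b)=\tau^{\mathbb{H}}(b,a)$, a translation in the last $d-1$ coordinates (which preserves $\mathbb{H}$), and a translation by $k\mathbf{e}_1$ (which sends $\mathbb{H}$ to $\mathbb{H}_k$). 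By~\eqref{e.PsiCrit} each layer contributes at most $C$, so summing over $k$ gives an outer sum bounded by $(n+1)C$.

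Combining the two estimates yields $\mathbb{E}[\mathcal{N}_n^2]\leq C^3(n+1)\lesssim n$. I expect the only subtle step to be recognising the layerwise identity above: this is precisely the \emph{resummation trick} announced in Section~\ref{sec:introStrat}, and it is what allows the argument to bypass any pointwise estimate on $\tau_{p_c}^{\mathbb{H}}$ and rely solely on the integrated input~\eqref{e.PsiCrit}.
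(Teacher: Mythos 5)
Your proof is correct and follows essentially the same route as the paper: the tree-graph/BK decomposition at the branching vertex $z$, followed by monotonicity in the domain and translation/reflection to reduce everything to $\psi_{p_c}(\mathbb{H}_k)$ and the bound \eqref{e.PsiCrit}, yielding in effect the same estimate $\sum_{k=0}^n\psi_{p_c}(\H_k)\psi_{p_c}(\H_{n-k})^2\lesssim n$. The only cosmetic difference is that you bound the inner sum uniformly by $C$ before summing over layers, whereas the paper keeps the product $\psi_{p_c}(\H_k)\psi_{p_c}(\H_{n-k})^2$ explicit until the end.
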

	
	\begin{Lem}\label{l.N1}
		For every $n\geq1$,
		\begin{align}\label{}
			\Eb{\mathcal N_n} \gtrsim \frac{1}{n}.
		\end{align}
	\end{Lem}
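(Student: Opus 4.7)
I plan to apply a near-critical reduction and the BK inequality, with the main input being the identity
\[F(k,p):=\sum_{y\in\mathbb{H},\, y_{1}=k}\tau_{p}^{\mathbb{H}}(0,y)=\psi_{p}(\mathbb{H}_{k}),\]
which follows readily from transverse translation invariance and the symmetry of the two-point function. By monotonicity of $p\mapsto \mathbb{E}_{p}[\mathcal{N}_{n}]$ (clusters grow in $p$), it suffices to prove the bound at a near-critical $p'<p_{c}$ with $p_{c}-p'\asymp n^{-2}$, so that Theorem~\ref{thm:INPUT} gives $L(p')\asymp n$. Combined with Corollary~\ref{coro:bounds on psi}, the identity above yields $F(n,p')\asymp 1$ (since $n\lesssim L(p')$ and $\varphi_{p'}(\mathbb{H}_n)\gtrsim 1$ in this regime), and Proposition~\ref{prop:half-space_susceptibility} gives $\sum_{k\geq 0}F(k,p')\asymp n$.

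Next, I aim at the core inequality
\[
F(n,p')\lesssim \mathbb{E}_{p'}[\mathcal{N}_{n}]\cdot G(n,p'),\qquad G(n,p'):=\sum_{y_{\perp}\in\mathbb{Z}^{d-1}}\tau_{p'}^{\mathbb{H}}(n\mathbf{e}_{1},(n,y_{\perp})).
\]
It follows from the first-passage decomposition of any path $0\leftrightarrow y$ (with $y_{1}=n$) inside $\mathbb{H}$ at its first visit $w$ to the plane $\{x_{1}=n\}$: before $w$ the path lies in $\mathbb{S}_{n}$, so one obtains $\tau_{p'}^{\mathbb{H}}(0,y)\leq \sum_{w_{1}=n}\tau_{p'}^{\mathbb{S}_{n}}(0,w)\,\tau_{p'}^{\mathbb{H}}(w,y)$ by BK. Summing over $y$ with $y_{1}=n$ and using transverse translation invariance for the inner sum turns it into $\mathbb{E}_{p'}[\mathcal{N}_{n}]\cdot G(n,p')$.

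The crux of the argument is then to show $G(n,p')\lesssim n$. Using translation $G(n,p')=\sum_{y_{\perp}}\tau_{p'}^{\mathbb{H}_{n}}(0,(0,y_{\perp}))$, I plan to iterate a BK-type first-passage decomposition of the connection $0\leftrightarrow (0,y_{\perp})$ in $\mathbb{H}_{n}$ through the planes $\{x_{1}=-j\}$, $0\leq j\leq n$, and to bound each ``slice'' by $\psi_{p'}(\mathbb{H}_{j})\lesssim 1$ via Corollary~\ref{coro:bounds on psi}. Summing the $\asymp n$ terms arising from this telescoping and using the half-space susceptibility bound $\chi^{\mathbb{H}}(p')\asymp n$ from Proposition~\ref{prop:half-space_susceptibility} to absorb the resulting ``back-connections'' is what I expect to deliver $G(n,p')\lesssim n$.

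Putting everything together gives $\mathbb{E}_{p'}[\mathcal{N}_{n}]\gtrsim F(n,p')/G(n,p')\gtrsim 1/n$, and monotonicity transfers this to $p_{c}$. The main obstacle will be the estimate $G(n,p')\lesssim n$: the telescoping has to be organized so that only integrated quantities of the form $\varphi_p(\mathbb{H}_k)$ or $\psi_p(\mathbb{H}_k)$ appear, since we are forbidden from using any pointwise estimate on $\tau$ (this is precisely the advantage the authors highlight over the approach of \cite{ChatterjeeHanson,KozmaNachmias}).
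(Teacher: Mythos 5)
Your overall architecture coincides with the paper's: both arguments start from $\sum_{x\in\partial\H_{-n}}\tau_{p_c}^{\H}(0,x)=\psi_{p_c}(\H_n)\gtrsim 1$, factor this sum at the entry into $\H_{-n}$ so as to isolate $\Eb{\mathcal N_n}$ times a transverse sum $G$, and reduce the lemma to $G\lesssim n$. The problem is that this last estimate --- which you yourself identify as the main obstacle --- is precisely the step you have not supplied, and the plan you sketch for it does not work as stated. An ``iterated first-passage decomposition through the planes $\{x_1=-j\}$, $0\leq j\leq n$'' produces a \emph{product} of slice quantities $\psi(\H_j)$ (one factor per plane crossed), not a sum of $\asymp n$ bounded terms, and there is no visible mechanism by which $\chi^{\H}(p')\asymp n$ would ``absorb back-connections''. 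The missing idea is a single decomposition of a path from $0$ to $(0,y_\perp)$ inside $\H_n$ according to its \emph{left-most vertex} $v\in\partial\H_i$, $0\leq i\leq n$: both resulting sub-paths $0\to v$ and $v\to y$ lie in $\H_i$, so \eqref{eq:BK}, translation invariance and \eqref{e.PsiCrit} give $G\leq\sum_{i=0}^{n}\psi_{p_c}(\H_i)^2\lesssim n$. The factor $n$ thus comes from the number of admissible depths $i$ of the left-most vertex, not from the half-space susceptibility; in particular the whole proof can (and in the paper does) stay at $p=p_c$, so your detour to $p'=p_c-\Theta(n^{-2})$, the monotonicity step, and the appeal to Proposition~\ref{prop:half-space_susceptibility} are all unnecessary for this lemma.

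A secondary but genuine flaw: for spread-out percolation a path can jump over the plane $\{x_1=n\}$ (an edge may join a vertex with $x_1=n-1$ to one with $x_1=n+2$), so it is \emph{not} true that the portion of the path preceding its first visit to that exact plane is contained in $\mathbb{S}_n$. You must instead cut at the last vertex $u$ visited before the first entry into $\H_{-n}=\{x\,:\,x_1\geq n\}$; such a $u$ lies in one of the $L$ planes $\partial\H_{-n+k}$, $1\leq k\leq L$, the preceding portion does lie in $\mathbb{S}_n$, and the resulting quantity $\sum_{k=1}^{L}\sum_{u\in\partial\H_{-n+k}}\tau_{p_c}^{\mathbb{S}_n}(0,u)$ must then be compared to $\Eb{\mathcal N_n}=\sum_{x\in\partial\H_{-n}}\tau_{p_c}^{\mathbb{S}_n}(0,x)$ by a finite-energy argument as in \eqref{eq:finite energy arg}. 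This repair is routine but absent from your write-up, and without it the core inequality $F\lesssim \Eb{\mathcal N_n}\cdot G$ is not established.
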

	
	Lemmas~\ref{l.N2} and \ref{l.N1} readily imply the desired lower bound: by the Cauchy--Schwarz inequality,
	\begin{align}\label{}
		\P_{p_c}[0\connect{\mathbb H\:} \Lambda_{n-1}^c] \geq \P_{p_c}[0\connect{\mathbb{S}_n\:} \partial \H_{-n}] 
		= \P_{p_c}[ \mathcal N_n \geq 1] 
		\geq \frac {\Eb{\mathcal N_n}^2} {\Eb{\mathcal N_n^2}}  \gtrsim \frac{1}{n^3}.
	\end{align}
\end{proof}

\begin{proof}[Proof of Lemma \textup{\ref{l.N2}}]
	By definition, 
	\begin{equation}
		\Eb{\mathcal N_n^2}=\sum_{x,y\in \partial \H_{-n}}  \P_{p_c}[0\connect{\mathbb{S}_n\:}x,~0\connect{\mathbb{S}_n\:}y].
	\end{equation}
	By considering the last intersection between a pair of open paths in $\mathbb{S}_n$ from $0$ to $x$ and from $0$ to $y$, we deduce that $\{0\connect{\mathbb{S}_n\:}x\}\cap \{0\connect{\mathbb{S}_n\:}y \} \subset \bigcup_{u\in \mathbb{S}_n} \{0\connect{\mathbb{S}_n\:}u\} \circ \{u\connect{\mathbb{S}_n\:}x\} \circ \{u\connect{\mathbb{S}_n\:}y\}$. Applying \eqref{eq:BK}, we obtain
	\begin{align}\label{eq:proof lb half space 1}
	\begin{split}
		\Eb{\mathcal N_n^2}&\leq \sum_{k=0}^n \sum_{u\in \partial \H_{-k}} \sum_{x,y\in \partial\H_{-n}} \tau_{p_c}^{\mathbb{S}_n}(0,u) \tau_{p_c}^{\mathbb{S}_n}(u,x) \tau_{p_c}^{\mathbb{S}_n}(u,y) \\
		&\leq \sum_{k=0}^n \sum_{u\in \partial \H_{-k}} \sum_{x,y\in \partial\H_{-n}} \tau_{p_c}^{\H}(0,u) \tau_{p_c}^{(\mathbb H_{-n})^c}(u,x) \tau_{p_c}^{(\H_{-n})^c}(u,y) \\
		&=\sum_{k=0}^n \psi_{p_c}(\H_k) \psi_{p_c}(\H_{n-k})^2 \lesssim n,
		\end{split}
	\end{align}
	where in the second line we used inclusion, and in the third line we used translation invariance and \eqref{e.PsiCrit}.
\end{proof}

\begin{proof}[Proof of Lemma \textup{\ref{l.N1}}] Let $n\geq 1$. On the one hand, combining \eqref{e.PsiCrit} and translation invariance gives 
	\begin{equation}\label{eq:H_lowerbound}
	 \sum_{x\in \partial \H_{-n}} \tau_{p_c}^{\H}(0,x) = \psi_{p_c}(\H_n) \gtrsim 1.
	\end{equation}
	On the other hand, by considering the step immediately before the first intersection of an open path from $0$ to $x$ with $\partial\H_{-n}$, we deduce that 
	\begin{equation}
	\{0\connect{\H\:}x\}\subset \bigcup_{1\leq k\leq L} \bigcup_{u\in \partial \H_{-n+k}} \{0\connect{\mathbb{S}_n\:}u\} \circ \{u\connect{\H\:}x\}.
	\end{equation}
	 Applying \eqref{eq:BK}, we obtain
	\begin{align}
		\begin{split}\label{eq:H-to-S}
			\sum_{x\in \partial \H_{-n}} \tau_{p_c}^{\H}(0,x) &\leq \sum_{k=1}^L \sum_{x\in \partial \H_{-n}}  \sum_{u\in \partial \H_{-n+k}} \tau_{p_c}^{\mathbb{S}_n}(0,u)\tau_{p_c}^{\H}(u,x) \\
			&= \sum_{k=1}^L \Bigg( \sum_{u\in \partial \H_{-n+k}} \tau_{p_c}^{\mathbb{S}_n}(0,u) \Bigg) \Bigg( \sum_{y\in\partial\H_{k}} \tau_{p_c}^{\H_{n}}(0,y) \Bigg),
		\end{split}
	\end{align}
	where in the last line we used translation invariance to obtain, for each $u\in \partial \mathbb H_{-n+k}$,
	\begin{equation}
		\sum_{x\in \partial \H_{-n}}\tau^{\mathbb H}_{p_c}(u,x)=\sum_{y\in\partial\H_{k}} \tau_{p_c}^{\H_{n}}(0,y).
	\end{equation} 
	Fix $1\leq k\leq L$ and $y\in \partial \H_k$. By decomposing an open path from $0$ to $y$ according to the left-most vertex it visits, we obtain $\{0\connect{\H_n\:}y\}\subset \bigcup_{k\leq i\leq n,\, v\in \partial\H_i} \{0\connect{\H_i\:}v\} \circ \{v\connect{\H_i\:}y\}$. Applying \eqref{eq:BK}, we obtain
	\begin{align}
		\begin{split}\label{eq:H_upperbound}
			\sum_{y\in\partial\H_{k}} \tau_{p_c}^{\H_n}(0,y) &\leq \sum_{i=k}^{n} \sum_{v\in \partial \H_i} \sum_{y\in\partial\H_{k}}  \tau_{p_c}^{\H_i}(0,v) \tau_{p_c}^{\H_i}(v,y) \\
			&= \sum_{i=k}^{n} \psi_{p_c}(\H_i)\psi_{p_c}(\H_{i-k}) \lesssim n,
		\end{split}
	\end{align}
	where in the second line we used translation invariance and \eqref{e.PsiCrit}. 
	Combining \eqref{eq:H_lowerbound}, \eqref{eq:H-to-S} and \eqref{eq:H_upperbound}, we obtain
	\begin{equation}\label{eq:}
		\sum_{k=1}^L \sum_{u\in \partial \H_{-n+k}} \tau_{p_c}^{\mathbb{S}_n}(0,u) \gtrsim n^{-1}.
	\end{equation}
	Now, by a simple comparison argument (similar to that of \eqref{eq:finite energy arg}), one can easily show that
	\begin{equation}
		\Eb{\mathcal N_n}=\sum_{x\in \partial \H_{-n}} \tau_{p_c}^{\mathbb{S}_n}(0,x) \geq \frac{1}{L}\sum_{k=1}^L \sum_{u\in \partial \H_{-n+k}} p_c\tau_{p_c}^{\mathbb{S}_n}(0,u),
	\end{equation}
	which combined with \eqref{eq:} concludes the proof.
\end{proof}

\subsection{Proof of the upper bound}

The proof of the upper-bound in~\eqref{eq:main_half-space} is divided into two steps:
\bnum
\item We first derive an upper bound on ``point to half-space probability'' $\P_{p_c}[0\connect{\mathbb H} \H_{-n}]$ using the same strategy as in the full-space setting.
\item Then, combining a suitable averaging trick with the BK inequality, we promote this upper bound to an upper bound on the larger quantity $\P_{p_c}[0\connect{\mathbb H\:} \Lambda_{n}^c]$. 
\enum

\noindent
\textbf{Step 1. Point to half-space in $\H$.} We will prove that, for every $n\geq 1$,
\begin{equation}\label{eq:pt-to-half-space_ub}
	\P_{p_c}[0\connect{\mathbb H\:} \H_{-n}] \lesssim n^{-3}.
\end{equation}
The proof is very similar to the one of the upper bound in Theorem \ref{thm: main result}, but there are two key differences. First, we use a \emph{last exit} decomposition instead of \emph{first exit} in order to compare scales $n$ and $n/2$ in the near-critical regime. Second, we rely on the bound \eqref{eq:half-space_susceptibility} on the half-space susceptibility, which is smaller than that of the full-space susceptibility \eqref{eq:input chi(p)}. This last point is responsible for the exponent changing from $2$ to $3$. 

Let $\theta^\H_n(p):= \mathbf{P}_{p}[0\connect{\H\:}  \H_{-n}]$ and $p':=p_c-Kn^{-2}$, for some constant $K$ to be chosen large enough below.
By decomposing an open path from $0$ to $\mathbb H_{-n}$ according to the point immediately after the \emph{last} intersection with $\H_{-\frac{n}{2}}^c$ and applying \eqref{eq:BK} we obtain
\begin{equation}\label{eq:half-space_Simon-Lieb}
	\theta_n^{\H}(p') \leq \sum_{k=0}^{L-1} \sum_{u\in \partial \H_{-\frac{n}{2} - k}} \tau^{\H}_{p'}(0,u) \mathbb{P}_{p'}[u \connect{\H_{-\frac{n}{2}}\:} \H_{-n}] \,.
\end{equation}
By translation invariance and a simple comparison argument (similar to that of \eqref{eq:finite energy arg}), for every $u$ as above we have 
\begin{equation}	
\mathbb{P}_{p'}[u \connect{\H_{-\frac{n}{2}}\:} \partial \H_{-n}]\leq \mathbb{P}_{p_c}[u \connect{\H_{-\frac{n}{2}}\:} \partial \H_{-n}] \leq \frac{1}{p_c} \theta^{\H}_{n/2}(p_c).
\end{equation}
Also by translation invariance, for every $k$ as above we have 
\begin{equation}
\sum_{u\in\partial \H_{-\frac{n}{2} - k}} \tau^{\H}_{p'}(0,u) = \psi_{p'}(\H_{\frac{n}{2} + k}).
\end{equation} 
Finally, by \eqref{e.PsiH}, we can choose $K$ large enough so that $\frac{1}{p_c} \sum_{k=0}^{L-1} \psi_{p'}(\H_{\frac{n}{2} + k})\leq 1/16$. 
Combining these observations with \eqref{eq:half-space_Simon-Lieb}, we obtain
\begin{equation}\label{eq:half-space_Simon-Lieb2}
	\theta_n^{\H}(p') \leq \frac{1}{16} \theta^{\H}_{\frac{n}{2}}(p_c) \,.
\end{equation}
Applying Lemma~\ref{l.EB} to $\Lambda=\mathbb H$ and $\mathcal A=\{0\connect{\mathbb S_n\:}\partial \H_{-n}\}$, and recalling that $\chi^\H(p')\leq C_2(p_c-p')^{-1/2}$ by \eqref{eq:half-space_susceptibility}, we obtain
\begin{align}\label{}
\begin{split}
	\theta_n^{\H}(p_c) &\leq \theta_n^\H(p')+C_0  (p_c - p')  \sqrt{\theta_n^\H(p_c) \chi^\H(p')} \\ 
	&\leq \frac{1}{16} \theta^{\H}_{\frac{n}{2}}(p_c) + C_0 (C_2K)^{1/2} n^{-3/2}  \sqrt{\theta_n^\H(p_c)} \,.
\end{split}
\end{align}
Arguing exactly as in full-space case, the inequality above easily implies \eqref{eq:pt-to-half-space_ub} by induction. \qed

\medskip

\noindent
\textbf{Step 2. Point to box in $\H$.} We first assume that $n\geq 6L$.
First, we notice that the event $\{ 0\connect{\mathbb H\:}\Lambda_{n-1}^c \}$ can be achieved either by connecting $0$ to $\H_{-n}$ within $\H$ (which is already taken care of by our first step), or by connecting $0$ to one of the $2(d-1)$ sides of $\Lambda_n$ in the directions of the basis vectors $\mathbf{e}_2,\ldots, \mathbf{e}_d$. Without loss of generality, let us focus on the $\mathbf{e}_2$ direction. For every $k\geq 1$, let $S_k$ be the {\em slice} defined by
\begin{align}\label{}
	S_k :=  \{ x=(x_1,x_2,\ldots, x_d)\in \Z^d \text{ s.t. } 0\leq x_1 < n+L \text{ and } x_2 =k\} 
\end{align}
By symmetry and union bound and taking into account the $L$-range of our spread-out model, we  have
\begin{align}\label{e.faces}
	\P_{p_c}[0\connect{\mathbb H\:}\Lambda_{n-1}^c ] & \leq  \P_{p_c}[0\connect{\mathbb H\:} \H_{-n}] + 2(d-1) \sum_{j=0}^{L-1} \P_{p_c}[0\connect{\H \setminus \H_{-n}\:} S_{n+j}]\,.
\end{align} 

\begin{figure}[htb]
	\begin{center}
		\includegraphics[width=0.65\textwidth]{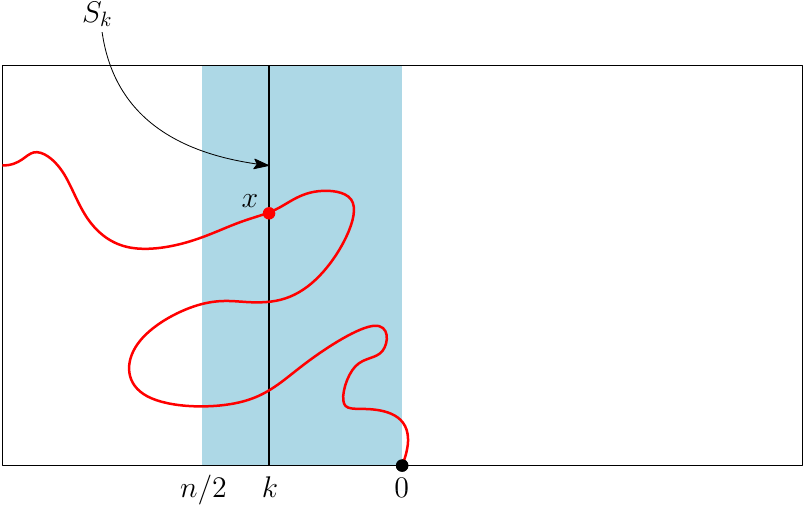}
	\end{center}
	\caption{An illustration of the decomposition used in \eqref{eq:final step up hs}}
	\label{f.slice}
\end{figure}

For each $0\leq j < L$ and each fixed $\frac n 4 \leq  k \leq \frac n 3$, on the event $\{ 0 \connect{\H\:} S_{n+j} \}$, there exists $k'$ with $|k'-k|\leq L/2$ and $x\in S_{k'}$ such that 
$x$ is connected to $S_{n+j}$ inside $\bigcup_{\ell=k'}^{n+j} S_\ell$ and $x$ is connected to 0 inside $\H$. ($x$ can be found as the last visit of the set $\bigcup_{|k'-k|\leq L/2}S_{k'}$ along some exploration procedure of the cluster of $0$ inside $\H$). See Figure \ref{f.slice} for an illustration.  
By union bound and \eqref{eq:BK}, we thus find, for every $\frac n 4 \leq k \leq \frac n 3$ that 
\begin{align}\label{eq:final step up hs}
\begin{split}
	\P_{p_c}[0\connect{\H \setminus \H_{-n}} S_{n+j}]
	& \leq   \sum_{k':|k'-k| \leq L/2} \sum_{x\in S_{k'}}  
	\P_{p_c}[0\connect{\H\:} x] 
	\P_{p_c}[x \connect{\bigcup_{\ell=k'}^{n+j} S_\ell\:} S_{n+j}] \\
	& \leq \sum_{k':|k'-k| \leq L/2}
	\sum_{x\in S_{k'}}  
	\P_{p_c}[0\connect{\H\:} x] 
	\P_{p_c}[0 \connect{\H\:} \H_{-n/2}] \\
	& \lesssim 
	\frac 1 {n^3} \sum_{k':|k'-k| \leq L/2} \sum_{x\in S_{k'}}  
	\P_{p_c}[0\connect{\H\:} x] \,,
	\end{split}
\end{align}
where the second line uses rotational invariance and the fact that the slices $S_{k'}$ are at distance at least $n/2$ from $S_{n+j}$ when $\frac n 4 \leq k \leq \frac n 3$ and $n\ge 6L$, and the last line uses \eqref{eq:pt-to-half-space_ub}.

It is natural to expect that for each such $k$, one has $\sum_{|k'-k|\leq L/2}\sum_{x\in S_{k'}}  
\P_{p_c}[0\connect{\H\:} x]\lesssim 1$. We do not know how to show this with our assumptions. To overcome this, we average \eqref{eq:final step up hs} over $k\in \{ \lfloor \frac n 4 \rfloor  , \ldots, \lfloor \frac n 3 \rfloor \}$. This gives 

\begin{align}\label{eq:last equation}
\begin{split}
\P_{p_c}[0\connect{\H \setminus \H_{-n}\:} S_{n+j}]	
	 & \leq  
\frac{L}{\lfloor \frac n 3 \rfloor - \lfloor \frac n 4 \rfloor+1}  \frac 1 {n^3}\sum_{k'=\lfloor \frac n 4 \rfloor - \frac L 2}^{\lfloor \frac n 3 \rfloor + \frac L 2 }
	\sum_{x\in S_{k'}}  
	\P_{p_c}[0\connect{\H\:} x]   
	\\
	& \lesssim  \frac{1}{n^4}  \sum_{x\in \H \setminus \H_{-(n+L)}}  \P_{p_c}[0\connect{\H\:} x] \\
	& \lesssim  \frac{1}{n^4}\sum_{j=0}^{n+L-1} \psi_{p_c}(\H_j)  \\
	& \lesssim \frac{1}{n^3}\,.
	\end{split}
\end{align}
By plugging this into \eqref{e.faces}, this concludes the proof of the upper bound in \eqref{eq:main_half-space} when $n\geq 6L$. The remaining values of $n$ are treated by using that $\mathbb P_{p_c}[0\connect{\mathbb H\:}\Lambda_n^c]\leq 1$, and adjusting the value of the constant in the last line of \eqref{eq:last equation}.
\qed

\appendix

\section{Percolation density, volume and magnetisation}\label{sec:percodensity}

In this Appendix we present a short proof, due to Hutchcroft \cite{hutchcroft2022derivation}, of mean-field upper bounds for the percolation density, volume and magnetisation.  These bounds were first obtained by Barsky and Aizenman \cite{BarskyAizenmanCriticalExponentPercoUnderTriangle1991} through a delicate use of differential inequalities and under the triangle condition \eqref{eq:triangle condition}. 
The proof below relies on the entropic bound of Lemma~\ref{l.EB} and only uses as an input the mean-field upper bound on the susceptibility, namely $\chi(p)\lesssim (p_c-p)^{-1}$ for every $p<p_c$. 
Let us mention that the main goal of \cite{hutchcroft2022derivation} is to derive upper bounds on $\chi(p)$ from upper bounds on the triangle diagram in a quantitatively better way than in \cite{BarskyAizenmanCriticalExponentPercoUnderTriangle1991}. In our case, such a bound on the susceptibility is directly provided by Theorem~{\ref{thm:INPUT}.

We start by defining the magnetisation $M(p,h)$, introduced in \cite{AizenmanBarsky1987sharpnessPerco} by analogy with the corresponding notion for the Ising model. Consider the \emph{ghost vertex} $\fg$ and the augment percolation on $\mathbb Z^d\cup \{\fg\}$ defined as follows: edges in $E=E_L$ are, as before, independently open with probability $p$, and additionally for every $x\in \mathbb Z^d$, the edge $x\fg$ is open independently of all the other edges with probability $1-e^{-h}$. Then, calling $\mathbb P_{p,h}$ the percolation measure on this enlarged graph, we define
\begin{equation}
	M(p,h)=\mathbb P_{p,h}[0\connect{}\fg].
\end{equation}
Notice that this quantity is directly related to the distribution of the volume of the cluster of the origin in $\Z^d$. Indeed, for $p\leq p_c$ and $h\geq 0$:
\begin{equation}
	M(p,h):=\sum_{n\geq 1}\mathbb P_p[|\mathcal C(0)|=n](1-e^{-nh}).
\end{equation}

It is well known that the mean-field lower bounds $\theta(p)\geq c(p-p_c)$, for every $p>p_c$, and $M(p_c,h)\geq ch^{1/2}$ for every $h>0$, hold in all dimensions $d\geq 2$.
They were derived through a differential inequality approach in \cite{AizenmanBarsky1987sharpnessPerco} (see also \cite{DuminilTassionNewProofSharpness2016} for an alternative proof). 
The following result shows that the corresponding upper bounds hold in dimensions $d>6$. 

\begin{Thm}\label{thm:MF bounds theta} Let $d>6$. Let $L_0$ be given by Theorem~\textup{{\ref{thm:INPUT}}}, and let $L\geq L_0$.  There exists $A\in(0,\infty)$ such that, for every $p>p_c$,
	\begin{equation}\label{eq:MF bound theta}
		\theta(p)\leq A(p-p_c),
	\end{equation}
	and, for every $n\geq1$,
	\begin{equation}\label{eq:MF volume}
		\mathbb{P}_{p_c}[|\mathcal{C}(0)|\geq n]\leq An^{-1/2}\,.
	\end{equation}
	In particular, for every $h\geq 0$,
	\begin{equation}\label{eq:MF bound M}
		M(p_c,h)\leq \sqrt{\pi}Ah^{1/2}.
	\end{equation}
\end{Thm}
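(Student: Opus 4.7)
The strategy throughout is to apply the entropic bound of Lemma \ref{l.EB} at $p = p_c$ (or just above), compare with a suitable near-critical parameter $p' < p_c$, and invoke the mean-field bound $\chi(p') \leq C_1(p_c - p')^{-1}$ from Theorem \ref{thm:INPUT} as the only external input. I would handle the three bounds in the order $(a)$ $\theta(p)$, $(b)$ the volume tail, $(c)$ $M(p_c, h)$, since $(c)$ is an immediate integral calculation from $(b)$.

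\textbf{Bound on $\theta(p)$.} For $p > p_c$ close to $p_c$, I would apply Lemma \ref{l.EB} to the event $\mathcal A_n := \{0 \connect{} \Lambda_n^c\}$ with $\Lambda = \mathbb Z^d$ and the symmetric choice $p' := 2p_c - p < p_c$, for which $p - p' = 2(p - p_c)$ and both lie in the admissible range. Letting $n \to \infty$, $\mathbb P_p[\mathcal A_n] \to \theta(p)$ and $\mathbb P_{p'}[\mathcal A_n] \to 0$ (no infinite cluster), so
\[
\theta(p) \leq 2C_0(p - p_c) \sqrt{\theta(p)\, \chi(p')} \leq 2C_0 \sqrt{C_1} \sqrt{(p - p_c)\, \theta(p)}.
\]
Squaring gives $\theta(p) \leq 4C_0^2 C_1 (p - p_c)$; the regime where $p$ is bounded away from $p_c$ is trivial after enlarging $A$.

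\textbf{Bound on the cluster volume.} I would apply Lemma \ref{l.EB} to $\mathcal A_{m,n} := \{|\mathcal C_{\Lambda_m}(0)| \geq n\}$ at $p = p_c$ versus $p' < p_c$, and let $m \to \infty$ by monotone convergence (which takes care of both the event and the cluster-size expectation appearing on the right-hand side). Markov's inequality bounds $\mathbb P_{p'}[|\mathcal C(0)| \geq n] \leq \chi(p')/n$, so with $u := \mathbb P_{p_c}[|\mathcal C(0)| \geq n]$ and $q := p_c - p'$ one obtains
\[
u \lesssim \frac{1}{nq} + q^{1/2} \sqrt u.
\]
Since this holds for every admissible $q$, minimizing the right-hand side (the optimum is at $q \asymp (n u^{1/2})^{-2/3}$, which lies in the admissible range for $n$ large) yields $u \lesssim n^{-1/3} u^{1/3}$, hence $u \lesssim n^{-1/2}$. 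Small $n$ are absorbed into $A$.

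\textbf{From volume to magnetisation, and main obstacle.} The bound on $M(p_c, h)$ follows directly from
\[
M(p_c, h) = h \int_0^\infty e^{-hs} \mathbb P_{p_c}[|\mathcal C(0)| > s]\, ds \leq Ah \int_0^\infty s^{-1/2} e^{-hs}\, ds = A\sqrt{\pi h},
\]
using the bound $\mathbb P_{p_c}[|\mathcal C(0)| > s] \leq A s^{-1/2}$ (which holds after enlarging $A$ so that $A \geq 1$, since then $A s^{-1/2} \geq 1$ for $s \leq A^2$). The main subtlety of the whole argument is the optimization in the volume step: a naive choice such as $q \asymp 1/n^2$ (used in Section \ref{sec:onearmfull}) yields only the suboptimal bound $u \lesssim 1/n$, and one must balance the Markov contribution $1/(nq)$ with the entropic contribution $q^{1/2}\sqrt u$ at the non-obvious scale $q \asymp (n u^{1/2})^{-2/3}$ in order to recover the mean-field exponent $1/2$.
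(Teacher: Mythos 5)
Your proposal is correct and follows essentially the same route as the paper: apply the entropic bound with $\Lambda=\mathbb Z^d$, the symmetric choice $p'=2p_c-p$ for $\theta(p)$, the near-critical window $p_c-p'\asymp n^{-1/2}$ for the volume tail (you arrive at this by optimizing over $q$ rather than anticipating it, but the resulting inequality is the same), and then a layer-cake/integration-by-parts passage to $M(p_c,h)$ using $\Gamma(1/2)=\sqrt\pi$. One peripheral remark is slightly off: with $q\asymp n^{-2}$ the Markov term $\chi(p')/n\asymp n$ exceeds $1$, so that choice yields no bound at all rather than the claimed $u\lesssim 1/n$; this does not affect any step of your actual argument.
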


\begin{proof}[Proof of \eqref{eq:MF bound theta}] 
	Let $p>p_c$ and consider the symmetrical subcritical parameter $p':=p_c - (p-p_c)$. Applying Lemma~\ref{lem:lower bound phi} for $\Lambda=\Z^d$ and $\mathcal{A}=\{0\connect{}\infty\}$, we obtain
	\begin{equation*}
		\theta(p) = \theta(p)-\theta(p')\leq C_0(p-p')\sqrt{\theta(p) \chi(p')} \leq 2C C_0^{1/2} (p-p_c)^{1/2} \sqrt{\theta(p)},
	\end{equation*}
	where in the last inequality we used that $\chi(p')\leq C_1 (p-p_c)^{-1}$ by Theorem~\ref{thm:INPUT}. 
	The inequality above readily implies that $\theta(p)\leq 4C_0^2 C_1 (p-p_c)$, which concludes the proof.
\end{proof}

\begin{proof}[Proof of \eqref{eq:MF volume}]
	Applying Lemma~\ref{lem:lower bound phi} for
$p':=p_c-n^{-1/2}$,  $\Lambda=\Z^d$, and $\mathcal{A}=\{|\mathcal{C}(0)|\geq n\}$, we obtain
	\begin{align}\label{eq:vol_tail1}
		\begin{split}
			\mathbb{P}_{p_c}[|\mathcal{C}(0)|\geq n] - \mathbb{P}_{p'}[|\mathcal{C}(0)|\geq n] 
			&\leq C_0n^{-1/2}\sqrt{\mathbb{P}_{p_c}[|\mathcal{C}(0)|\geq n] \chi(p')} \\
			&\leq C_0 C_1^{1/2} n^{-1/4} \sqrt{\mathbb{P}_{p_c}[|\mathcal{C}(0)|\geq n]}\,,
		\end{split}
	\end{align}
	where in the second line we used that $\chi(p')\leq C_1 n^{1/2}$ by Theorem~\ref{thm:INPUT}. By Markov's inequality, we have
	\begin{equation}\label{eq:vol_tail2}
		\mathbb{P}_{p'}[|\mathcal{C}(0)|\geq n] \leq \frac{\mathbb{E}_{p'}[|\mathcal{C}(0)|]}{n}= \frac{\chi(p')}{n}\leq C_1n^{-1/2}\,.
	\end{equation}
	Combining \eqref{eq:vol_tail1} and \eqref{eq:vol_tail2}, we obtain
	\begin{equation}
		\mathbb{P}_{p_c}[|\mathcal{C}(0)|\geq n]  \leq  C_1n^{-1/2} + C_0C_1^{1/2} n^{-1/4} \sqrt{\mathbb{P}_{p_c}[|\mathcal{C}(0)|\geq n]}\,,
	\end{equation}
	from which \eqref{eq:MF volume} follows readily for $A$ large enough (depending on $C_0$ and $C_1$).
	\end{proof}

\begin{Rem}
A cluster with volume of $n$ typically has diameter of order $n^{1/4}$. Since $L(p)\asymp (p_c-p)^{-1/2}$, this explains why the relevant near-critical window in the proof above is $p'=p_c - n^{-1/2}$.
\end{Rem}

	\begin{proof}[Proof of \eqref{eq:MF bound M}]
	By definition, we have
	\begin{align*}
	\begin{split}
		M(p_c,h)= \sum_{n\geq 1} \mathbb{P}_{p_c}[|\mathcal{C}(0)|= n] (1-e^{-hn}) &= \sum_{n\geq 1} \mathbb{P}_{p_c}[|\mathcal{C}(0)|\geq n] (e^{-hn}-e^{-h(n+1)}) \\
		&\leq A(1-e^{-h}) \sum_{n\geq1} n^{-1/2}e^{-hn} \\
		&\leq Ah \int_{0}^{\infty} t^{-1/2}e^{-ht} dt \\
		&= Ah h^{-1/2}  \int_{0}^{\infty} s^{-1/2}e^{-s} ds = \sqrt{\pi}A h^{1/2}\,,
	\end{split}
	\end{align*}
	where we have used (discrete) integration by parts in the first line, \eqref{eq:MF volume} and the inequality 		$1-e^{-h}\leq h$ in the second line, the fact that $f(t)=t^{-1/2}e^{-ht}$ is decreasing for $t\geq0$ in the third line, and the change of variables $s=ht$ in the fourth line. This concludes the proof.
	\end{proof}

\bibliographystyle{alpha}
\bibliography{biblioPerconote}
\end{document}